\newtheorem{theorem}{Theorem}
\renewcommand{\vec}[1]{\mathbf{#1}}
\newcommand{\mat}[1]{\mathbf{#1}}
\newcommand{\abs}[1]{|#1|}
\newcommand{\ie}{{\em i.e.}}
\newcommand{\eg}{{\em e.g.}}
\newcommand{\ceil}[1]{\lceil #1 \rceil}
\journal{Computers \& Mathematics with Applications}
\begin{document}

\begin{frontmatter}

\title{A Stochastic Method for Solving Time-Fractional Differential Equations}

\author[1]{Nicolas L. Guidotti \corref{cor1}} \ead{nicolas.guidotti@tecnico.ulisboa.pt}
\author[1,2]{Juan A. Acebr\'on} \ead{juan.acebron@tecnico.ulisboa.pt}
\author[1]{Jos\'e Monteiro} \ead{jcm@inesc-id.pt}

\affiliation[1]{organization = {INESC-ID, Instituto Superior T\'ecnico, Universidade de Lisboa}, country = {Portugal}}
\affiliation[2]{organization = {Department of Mathematics, Carlos III University of Madrid}, country = {Spain}}

\cortext[cor1]{Corresponding author}

\begin{abstract}

We present a stochastic method for efficiently computing the solution of time-fractional partial differential equations (fPDEs) that model anomalous diffusion problems of the subdiffusive type. After discretizing the fPDE in space, the ensuing system of fractional linear equations is solved resorting to a Monte Carlo evaluation of the corresponding Mittag-Leffler matrix function. This is accomplished through the approximation of the expected value of a suitable multiplicative functional of a stochastic process, which consists of a Markov chain whose sojourn times in every state are Mittag-Leffler distributed. The resulting algorithm is able to calculate the solution at conveniently chosen points in the domain with high efficiency. In addition, we present how to generalize this algorithm in order to compute the complete solution. For several large-scale numerical problems, our method showed remarkable performance in both shared-memory and distributed-memory systems, achieving nearly perfect scalability up to $16,384$ CPU cores.

\end{abstract}

%

\begin{keyword}

Monte Carlo method \sep time-fractional differential equations \sep anomalous diffusion \sep Mittag-Leffler function \sep matrix functions \sep parallel algorithms \sep high performance computing 

\MSC 33E12 \sep 
35R11\sep 
60K50 \sep 
65C05 \sep 
68W10 

\end{keyword}

\end{frontmatter}


\section{Introduction}\label{sec:intro}

Among the many non-conventional statistical phenomena observed experimentally in the last few years and characterized typically by unusual scaling exponents, the anomalous diffusion stands out for its major impact in a variety of scientific disciplines~\cite{metzler_random_2000}.
To cite just a few relevant examples, it has been already observed in turbulent atmospheres, transport in amorphous solids, diffusion in the cell nucleus, magnetic resonance imaging, search behaviour, and in finance for accurate modelling of the stock price fluctuations.

In the anomalous diffusion, the mean squared displacement (MSD) varies nonlinearly in time, \ie, $\langle x^2(t) \rangle \sim t^{\alpha}$ with $\alpha \ne 1$, during the entire process. Here $x(t)$ is the relative position at time $t$ of a particle with respect to a given reference point. This contrasts with the classical diffusion problems, modelled typically by a Brownian motion, where it is well known that the MSD grows linearly in time. In practice, there are two different types of regimes according to the value of the scaling exponent $\alpha$. The phenomenon is termed as subdiffusion when $\alpha<1$ and, superdiffusion when $\alpha>1$. It has been proposed in the literature several pathways leading to anomalous diffusion. The most commonly found are the presence of long-range correlations, non-identical displacements, or non-finite mean or variance of the probability density function for the trajectory displacements.

Within this context, fractional calculus has been proven to be extremely useful in order to provide realistic models for many real-life processes and phenomena \cite{west_fractional_2016, lischke_what_2020}. The main reason for this is due to fractional derivative operators being in practice non-local operators, and therefore they are especially suited for describing the long-time memory and spatial heterogeneity effects typically found in any anomalous diffusion problem.

The formal solution to the simplest dynamic problem defined by a system of linear fractional rate equations for an $n$-dimensional process $\vec{y}(t)$ subject to a given initial condition $\vec{y}(0)={\bf y_0}$ is

\begin{equation}\label{eq:n-dim-sol}
\vec{y}(t)=E_{\alpha}(\mat{A} \,t^{\alpha}) \, {\bf y_0},
\end{equation}
where $\mat{A}$ is an $n\times n$ matrix of constant coefficients. When $\alpha=1$, the Mittag-Leffler (ML) function $E_{\alpha}(\mat{A} \,t^{\alpha})$ reduces to the matrix exponential $e^{\mat{A} \,t}$. The ML function also appears in other scientific domains, such as biology \cite{estrada_fractional_2020, magin_fractional_2004}, physics \cite{hilfer_applications_2000, sabatier_advances_2007, mainardi_fractional_2010}, control systems \cite{sabatier_advances_2007, chen_fractional_2009} and complex network analysis \cite{arrigo_ml_network_2021, martinez_world-wide_2022}. It is important to remark here that this system of linear rate equations serves as building blocks for more complex systems, and therefore becomes of paramount importance in investigating algorithms capable of computing it efficiently.

However, an efficient algorithm for solving large-scale problems is still missing. Therefore, the main contribution of this paper is to propose a probabilistic method for the efficient computation of the ML matrix function. This method is based on random walks that change states according to a suitable continuous-time Markov chain, whose sojourn times follow a Mittag-Leffler distribution.
Note that a random process governed by the ML distribution is no longer Markovian due to the presence of long-term memory between states.
The method we proposed allows us to compute the ML matrix function for large-scale matrices, focusing primarily on the action of the function over a vector. Our second contribution is an extensive analysis of the performance and convergence of our method using a few relevant numerical examples, including a formal description of the variance and a performance comparison against classical algorithms. Our third and last contribution was to parallelize and analyze the scalability of the stochastic algorithm for a large number of processors (up to $16,384$ CPU cores) in the Karolina Supercomputer located in IT4Innovations National Supercomputing Centre.

The paper is organized as follows. In the next section, we give an overview of the state of the art in this area. Section \ref{sec:method} describes the probabilistic method and its practical implementation. This section also shows how a program can generate random numbers from the Mittag-Leffler probability distribution. In Section \ref{sec:results}, several numerical experiments are presented in order to characterize the performance and numerical errors of the stochastic method compared to classical approaches. Finally, in Section \ref{sec:conclusion}, we highlight the main results and conclude the paper.

\section{Background}\label{sec:background}

One of the major drawbacks of describing a problem in terms of fractional operators is the lack of uniqueness in the mathematical formulation. This contrasts sharply with the classical calculus. In fact, in fractional calculus, there exists a variety of different definitions of fractional operators (differentials and integrals), and they are chosen according to a given set of assumptions imposed to satisfy different needs and constraints of the particular problem. There are already in the literature several excellent texts that review the mathematics of fractional calculus, \eg~\cite{mainardi_fractional_2010}. Among the different definitions of fractional operators, the Caputo fractional derivative appears as the most commonly used for modelling temporal evolution phenomena, which accounts for past interactions and non-local properties.
Moreover, mathematically the initial conditions imposed for fractional differential equations with Caputo derivatives coincide with the integer-order differential equations. Such derivative is defined as follows:
\begin{equation}
^{C}D_t^{\alpha}f(t)=\int_0^t \frac{f^{(n)}(\tau)}{(t-\tau)^{\alpha+1-n}}d\tau,
\label{caputo_def}
\end{equation}
with $n$ an integer such that $ n = \ceil{\alpha}$. Note that if $0<\alpha<1$, $n = 1$.

The simplest dynamic process described in terms of fractional operators consists of an initial value problem for a linear fractional rate equation, which for a given variable $y(t)$ is given by
\begin{equation}
^{C}D_t^{\alpha} y(t)=-\lambda y(t), \quad y(0)=y_0.
\end{equation}
The solution of this equation can not be expressed in a closed form, being rather an infinite series. It was first obtained by the mathematician Mittag-Leffler and is formally written as
\begin{equation}
y(t)=y_0\, E_{\alpha}(-\lambda t^{\alpha}),
\end{equation}
where $E_{\alpha}(t)$ is the function defined as
\begin{equation}
E_{\alpha}(-\lambda\, t^{\alpha})=\sum_{k=0}^{\infty} \frac{(-\lambda\,t^{\alpha})^k}{\Gamma(k\alpha+1)},
\end{equation}
and carries his name. Here $\Gamma(x)$ denotes the Euler's Gamma function. Note that when $\alpha=1$, it reduces to the exponential function. Essentially, the Mittag-Leffler function generalizes the exponential function and has been considered for many as the {\it Queen function} of the fractional calculus~\cite{mainardi_why_2020, gorenflo_mittag-leffler_2020, gorenflo_fractional_1997}. For complex parameters $\alpha$ and $\beta$ with $\mathbb{R}\{\alpha\} > 0$, the Mittag-Leffler function (ML function) can be further generalized, and was defined as
\begin{equation}
\label{eq:ml_def}
    E_{\alpha,\beta}(z) = \sum_{k = 0}^{\infty}{\frac{z^k}{\Gamma(k\alpha + \beta)}}, z \in \mathbb{C}.
\end{equation}
Here $E_{\alpha, \beta}(z)$ is an entire function of order $\rho = 1 / \mathbb{R}\{\alpha\}$ and type 1.  The exponential function can be recovered by setting $\alpha = \beta = 1$ since $\Gamma(k~+~1)~=~k!$ for $k \in \mathbb{N}$. When $\beta = 1$, the notation of the ML function can be simplified to $E_\alpha(z) = E_{\alpha, 1}(z)$.

A naive approach to evaluate the ML function comes directly from its definition in (\ref{eq:ml_def}). However, this is ill-advised for most applications since the convergence of the series is very slow if either the modulus $\abs{z}$ is large or the value of $\alpha$ is small. Moreover, the terms of the series may grow very large before decreasing, which can cause overflows and catastrophic numerical cancellations when using standard double-precision arithmetic (IEEE 754). For this reason, the scalar ML function is usually evaluated through alternative methods~\cite{hilfer_computation_2006, garrappa_numerical_2015, seybold_numerical_2009, gorenflo_computation_2002}, such as the Taylor series, inversion of the Laplace transformation, integral representation and other techniques.

To evaluate the ML matrix function, Garrappa~\cite{garrappa_computing_2018} recently proposed a Schur-Parlett algorithm~\cite{davies_schur-parlett_2003}, which in practice requires computing its derivatives. This method, however, is only suited for small matrices since the Schur-Parlett algorithm scales with $O(n^3)$, but it can go up to $O(n^4)$ depending on the distribution of the eigenvalues. Moreover, if the eigenvalues are highly clustered, the algorithm may require high-order derivatives which are more expensive to compute, while also being less accurate. There is also a Krylov-based method for computing the action of a Mittag-Leffler function over a vector~\cite{moret_convergence_2011}, however, the code is not publicly available.

As an alternative to deterministic methods, Monte Carlo algorithms for approximating matrix functions have already been proposed in the past~\cite{forsythe_matrix_1950, dimov_monte_2008, dimov_parallel_2001, ji_convergence_2013, benzi_analysis_2017}, primarily for solving linear systems. In essence, these methods generate random walks sampled from a discrete Markov chain governed by the matrix $\mat{A}$, approximating each power of the Neumann series \cite{higham_functions_2008}. The convergence of this method was rigorously established in~\cite{benzi_analysis_2017, ji_convergence_2013, dimov_new_2015}. It is broadly accepted that these stochastic methods offer interesting features from a computational point of view, such as being easily parallelizable, fault-tolerant, and more suited to heterogeneous architectures (which are extremely important attributes because of the architecture of current high-performance computers). However, the truth is that they also exhibit some significant weaknesses, namely a very slow convergence rate to the solution. This can cause the underlying algorithms to be highly demanding computationally, especially when highly accurate solutions are required.

Only recently have Monte Carlo methods been extended to compute other matrix functions, namely the matrix exponential \cite{acebron_monte_2019}. The method requires first to decompose the input matrix into two matrices, one of them a diagonal matrix, and the other a Laplacian matrix. The computation of the matrix exponential is then approximated using the Strang splitting. The algorithm consists of generating continuous-time random walks using the Laplacian matrix as the generator, with a holding time governed by an exponential distribution. Later, in ~\cite{acebron_highly_2020} it was applied a multilevel technique to improve the performance of this method. However, the Strang splitting cannot be applied to the ML matrix function, and thus, we have to generate the random walks directly using the input matrix.

For the specific case of the one-dimensional fractional diffusion equation,~\cite{nichols_subdiffusive_2018} introduced a Monte Carlo method for simulating discrete time random walks which are used for obtaining numerical solutions of the fractional equation, but not for computing the ML matrix function.

\section{Description of the Probabilistic Method} \label{sec:method}

In this section, we describe the probabilistic representation of the solution vector as well as a practical algorithm for approximating the action of the Mittag-Leffler function over a vector using this representation. With slight modifications, the algorithm can either calculate a single entry of the solution vector or the full solution. We also present a way to sample random numbers from the Mittag-Leffler distribution.

\subsection{Mathematical description} \label{sec:theory}

Consider: $\mat{A}=(a_{ij})$, a general {\it n}-by-{\it n} matrix with $a_{ii} < 0$ $\forall i$; $\vec{u}$, a given {\it n}-dimensional vector; and $\vec{y}(t)$, an {\it n}-dimensional vector. This last vector corresponds to
the solution vector after computing the action of the Mittag-Leffler function over the vector $\vec{u}$, that is $\vec{y}(t)=E_{\alpha,\beta} (\mat{A}\, t^{\alpha})\,{\bf u}$ with $0<\alpha\le 1$, $\beta>0$ and $t \geq 0$. Here $t$ denotes the value of time when the solution is computed.

Let us define a diagonal matrix $\mat{D}$, represented here as a vector $\vec{d}$ with $d_i = d_{ii} = a_{ii}$ for $i=1,\ldots,n$; a matrix $\mat{M} = (m_{ij})$, a matrix obtained as $\mat{M} = \mat{A} - \mat{D}$, and hence with zero diagonal entries; $\mat{Q}$, the matrix with entries $q_{ij}$ given by
\begin{equation}
q_{ij}=
    \begin{cases}
      0, & \text{if}\ i=j \\
     \frac{|m_{ij}|}{\sum_{j=1}^n|m_{ij}|},  & \text{otherwise;}
    \end{cases}\label{def.Q}
\end{equation}
$\mat{G}$, a matrix with entries $g_{ij}$ taking values $1$ when $m_{ij}\ge 0$, and $-1$ otherwise; and finally $\mat{R}$, a matrix with entries
\begin{equation}
r_{ij}=
    \begin{cases}
      r_i=\frac{\sum_{j=1}^n|m_{ij}|}{d_i}, & \text{if}\ i=j \\
     0,  \quad \text{otherwise;}
    \end{cases}\label{def.R}
\end{equation}
Note that $\sum_{j=1}^n q_{ij}=1 \;\;\forall i$,  and therefore matrix $\mat{Q}$ can be used as the transition rate matrix of a given Markov chain. Furthermore, it holds that $\mat{M} = \mat{D}\,\mat{R}\,(\mat{G}*\mat{Q})$, where $*$ denotes element-wise matrix multiplication.

\begin{theorem}
\label{theorem1}
Let $\{X_t:t\ge 0\}$ be a stochastic process with finite state space $\Omega~=~\{1,2,\dots,n\}$ given by
\begin{equation}
X_t=\sum_{m=1}^{\infty} Z_{m-1}\mathds{1}_{[T_{m-1}\leq t \leq T_{m}]}.
\end{equation}
Such process changes states according to a Markov chain $Z=(Z_m)_{m\in \mathbb{N}}$, which takes values in $\Omega$ and $\mat{Q}$ is the corresponding transition matrix. Here, $T_k$ is the time of the $k$-th event, and $\mathds{1}_E$ denotes the indicator function, being $1$ or $0$ depending on whether the event $E$ occurs. The sojourn times in the $i$-th state follow the Mittag-Leffler distribution $ML_\alpha(d_i)$. Then, we have that any entry $i$ of the solution vector $\vec{y}(t)$ can be represented probabilistically as
\begin{equation}
y_i(t)=\mathds{E} \left[ u_{X_0}\mathds{1}_{[T_0>t]}+ \left(\prod_{j=1}^\eta r_{X_{T_{j-1}}}\, g_{X_{T_{j-1}},X_{T_{j}}} \right) u_{X_{T_\eta}} \mathds{1}_{[T_0\leq t]}\right],\label{prob_rep}
\end{equation}
where $X_0=i$. Here $\mathds{E}$ is the expected value with respect to the joint distribution function of the random variables $T$ and $\eta$, where $\eta$ is the number of events occurring between $0$ and $t$.
 \end{theorem}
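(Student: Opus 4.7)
The plan is to verify (\ref{prob_rep}) by showing that the probabilistic quantity on its right-hand side satisfies the same fractional Cauchy problem as $E_{\alpha}(\mathbf{A}t^{\alpha})\mathbf{u}$, and then to conclude by uniqueness after Laplace inversion. I would denote the right-hand side by $\tilde{y}_i(t)$ and condition on the time $T_0$ of the first jump of the process started from $X_0=i$. Because the sojourn time $T_0\sim ML_{\alpha}(d_i)$ is independent of the post-jump state $Z_1$ (which is distributed according to the $i$-th row of $\mathbf{Q}$), the multiplicative functional in (\ref{prob_rep}) factorises cleanly: the first jump contributes the weight $r_i\, g_{i,Z_1}$, and by the Markov property of the embedded chain $Z$, conditionally on $\{Z_1=j,\, T_0=\tau\le t\}$ the remainder is a copy of the same functional started from $j$ on a time interval of length $t-\tau$. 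This gives the renewal-type Volterra equation
\[
\tilde{y}_i(t) = u_i\, E_{\alpha}(d_i t^{\alpha}) + r_i \sum_{j=1}^{n} g_{ij}\, q_{ij} \int_0^t f_i(\tau)\, \tilde{y}_j(t-\tau)\, d\tau,
\]
where $E_{\alpha}(d_i t^{\alpha}) = \mathbb{P}(T_0>t)$ is the survival function of $ML_{\alpha}(d_i)$ and $f_i(\tau) = -\tfrac{d}{d\tau}E_{\alpha}(d_i \tau^{\alpha})$ is its density.

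Next I would apply the Laplace transform, using the standard identities
\[
\mathcal{L}\{E_{\alpha}(d_i t^{\alpha})\}(s) = \frac{s^{\alpha-1}}{s^{\alpha}-d_i}, \qquad \mathcal{L}\{f_i\}(s) = \frac{-d_i}{s^{\alpha}-d_i},
\]
together with the convolution theorem on the integral term. After clearing the common denominator $s^{\alpha}-d_i$ and invoking the factorisation $\mathbf{M}=\mathbf{D}\mathbf{R}(\mathbf{G}*\mathbf{Q})$, which gives $d_i\, r_i\, g_{ij}\, q_{ij} = m_{ij}$ together with $a_{ii}=d_i$ and $a_{ij}=m_{ij}$ for $j\neq i$, the whole system should collapse to the compact form
\[
(s^{\alpha}\mathbf{I} - \mathbf{A})\, \hat{\tilde{\mathbf{y}}}(s) = s^{\alpha-1}\, \mathbf{u},
\]
which is precisely the Laplace transform of the Caputo Cauchy problem $^{C}D_t^{\alpha}\mathbf{y} = \mathbf{A}\mathbf{y}$ with $\mathbf{y}(0)=\mathbf{u}$. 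Since this problem admits the unique solution $\mathbf{y}(t)=E_{\alpha}(\mathbf{A}t^{\alpha})\mathbf{u}$, Laplace inversion yields $\tilde{y}_i(t)=y_i(t)$.

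The hardest step will be the rigorous justification of the renewal decomposition. Because the $ML_{\alpha}$ sojourns are heavy-tailed and non-memoryless for $\alpha<1$, the process $\{X_t\}$ is only semi-Markov, so one cannot appeal to the strong Markov property of $X_t$ itself; the argument must instead rest on the Markov property of the embedded discrete chain $Z$, the conditional independence of $T_0$ and $(Z_m)_{m\ge 1}$ given $Z_0=i$, and a measurability argument ensuring that the post-jump conditional expectation really reproduces $\tilde{y}_{Z_1}(t-T_0)$. A secondary but delicate point is the sign accounting: since $d_i<0$ forces $r_i\le 0$ while $g_{ij}\in\{\pm 1\}$ carries the sign of $m_{ij}$, one must verify that the factor $(-d_i)\, r_i\, g_{ij}\, q_{ij}$ arising from $\mathcal{L}\{f_i\}$ combined with the jump weight reproduces $a_{ij}$ with the correct sign, so that the matrix appearing in the Laplace equation really is $\mathbf{A}$.
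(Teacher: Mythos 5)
Your proposal takes essentially the same route as the paper: both reduce the probabilistic representation to the renewal-type Volterra equation $\tilde{y}_i(t)=u_iE_\alpha(d_it^\alpha)+\sum_j r_i g_{ij}q_{ij}\int_0^t f_i(\tau)\tilde{y}_j(t-\tau)\,d\tau$ and then close the argument with the Laplace transform of the Mittag-Leffler functions, the only difference being that you obtain this equation directly by conditioning on the first jump while the paper expands over the number of events $\eta$ and re-sums the series, which is equivalent. The sign subtlety you flag at the end is genuine and worth resolving rather than deferring: with $r_i=\sum_j|m_{ij}|/d_i$ as defined, one gets $(-d_i)\,r_i\,g_{ij}\,q_{ij}=-m_{ij}$, so the Laplace-domain system collapses to $s^\alpha\mathbf{I}-\mathbf{D}+\mathbf{M}$ rather than $s^\alpha\mathbf{I}-\mathbf{A}$ (the same inconsistency appears in the paper between its expression for $\boldsymbol{\nu}^{(1)}$ and the final integral equation it Laplace-transforms), and it is fixed by taking $|d_i|=-d_i$ in the denominator of $r_i$.
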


\begin{proof}
Our goal is to prove that the expected value of the functional of the stochastic process $X_t$ in (\ref{prob_rep}) coincides with the vector solution $\vec{y}(t)$. As a first step in the proof, it is required to compute the joint distribution function of the random variables $T$ and $\eta$. This function depends on the probability of jumping from state $i$ to state $j$ at time $t$, $\mathds{P}(X_t=j|X_0=i)$. Let us define $\mat{P}$ as the probability matrix with entries $p_{ij}$ given by $p_{ij}=\mathds{P}(X_t=j|X_0=i)$, then it holds that
\begin{equation}
p_{ij}(t)=\delta_{ij}\,\mathds{P}(t_i>t)+\int_0^t ds\, f_i(s) \sum_{l\ne i} \mathds{P}(X_\tau=l,X_t=j|\tau=s),
\label{prob_def1}
\end{equation}
where $t_i$ corresponds to the instant of time of a first event when the state is $i$.
Here the function $f_i(s) = -d_i\, s^{\alpha-1}\,E_{\alpha,\alpha}(d_i\, s^{\alpha})$  is a density function. In fact, it satisfies that
\begin{equation}
\int_0^{\infty}ds\,f_i(s)=1,
\end{equation}
and is called the Mittag-Leffler density function \cite{gorenflo_mittag-leffler_2020}. Then the probability of no events taking place in the interval of time $[0,t]$,  $\mathds{P}(t_i>t)$,  is given by
\begin{equation}
\mathds{P}(t_i>t)=\int_t^{\infty}ds\,f_i(s)=E_{\alpha}(d_i\, t^{\alpha}).
\end{equation}
Therefore, it follows that
\begin{equation}
p_{ij}(t)=\delta_{ij}\,E_{\alpha}(d_i\, t^{\alpha})+\int_0^t ds\, f_i(s) \sum_{l\ne i} q_{il}\, p_{lj}(t-s).
\label{prob_def2}
\end{equation}
This is an integral equation for $p_{ij}(t)$ that can be solved recursively, and the solution can be rewritten formally as
\begin{equation}
p_{ij}(t)=\sum_{\eta=0}^\infty p_{ij}^{(\eta)}(t),
\end{equation}
where $p_{ij}^{(\eta)}(t)$ denotes the probability of having a transition from $i$ to $j$ when $\eta$ events occur during the time interval $[0, t]$. Let $\nu_i^{(\eta)}(t)$ be the corresponding contribution to the solution $y_i(t)$ for $\eta$ events. If no events occur, the contribution to the solution $\nu_i^{(0)}(t)$ can be readily obtained as $E_{\alpha}(d_i\, t^{\alpha})\,u_i$. The remaining contributions can be obtained as follows. For one event, from (\ref{prob_def2}) the probability is given by
\begin{equation}
p_{ij}^{(1)}(t)=\int_0^t ds\, f_i(s) \sum_{l\ne i}  q_{il}\,\delta_{lj}\, E_{\alpha}(d_l\, (t-s)^{\alpha}),
\end{equation}
and thus the corresponding contribution to the solution is
\begin{equation}
\nu_i^{(1)}(t)=\sum_{j=1}^n \int_0^t ds\, f_i(s)\sum_{l\ne i} r_i\, g_{il}\, q_{il}\, \delta_{lj} E_{\alpha}(d_l\, (t-s)^{\alpha})\,u_j.
\end{equation}
Note that the equation above can be rewritten equivalently in matrix form as
\begin{equation}
\boldsymbol{\nu}^{(1)}(t)=\int_0^t ds\, {\bf f}(s) \mat{R}\,(\mat{G}*\mat{Q})\, E_{\alpha}(\mat{D}\, (t-s)^{\alpha})\,{\bf u},
\end{equation}
where $\boldsymbol{\nu}^{(1)}$ and ${\bf f}(s)$ are vectors with components $\nu_i^{(1)}$ and $f_i(s)$, respectively. Since  $\mat{R}\,(\mat{G}*\mat{Q}) = \mat{D}^{-1}\,\mat{M}$, then
\begin{equation}
\boldsymbol{\nu}^{(1)}(t)=-\int_0^t ds\,  s^{\alpha-1}\,E_{\alpha,\alpha}(\mat{D}\, s^{\alpha}) \mat{M}\, E_{\alpha}(\mat{D}\, (t-s)^{\alpha})\,{\bf u}.
\end{equation}
From (\ref{prob_def2}), the contribution corresponding to the two events is
\begin{equation}
\boldsymbol{\nu}^{(2)}(t)=\int_0^t ds_1 {\bf f}(s_1)\, \mat{D}^{-1} \mat{M}\, \int_0^{t-s_1} ds_2  {\bf f}(s_2)\,\mat{D}^{-1} \mat{M}\, E_{\alpha}(\mat{D}\, (t-s_1-s_2)^{\alpha})\,{\bf u},
\end{equation}
and can be rewritten in terms of $\boldsymbol{\nu}^{(1)}$ as follows
\begin{equation}
\boldsymbol{\nu}^{(2)}(t)=-\int_0^t ds {\bf f}(s)\, \mat{D}^{-1} \mat{M}\, \boldsymbol{\nu}^{(1)}(t-s).
\end{equation}
Similarly, for an arbitrary number of events $\eta$, the corresponding contribution can be computed in terms of the contribution 
for $\eta-1$ events, and yields
\begin{equation}\label{general_term}
\boldsymbol{\nu}^{(\eta)}(t)=-\int_0^t ds {\bf f}(s)\, \mat{D}^{-1} \mat{M}\, \boldsymbol{\nu}^{(\eta-1)}(t-s).
\end{equation}
Let define an integral operator $\mat{H}$ as
\begin{equation}
\mat{H}\psi(t):= \int_0^t ds\, {\bf f}(s) \mat{D}^{-1} \mat{M} \psi(t-s).
\end{equation}
Then, we have that
\begin{equation}
\boldsymbol{\nu}^{(\eta)}(t)=-\mat{H} \boldsymbol{\nu}^{(\eta-1)}(t).
\end{equation}
This recursive equation can be easily solved, and hence the contribution of $\eta$ events is given by 

\begin{equation}
\boldsymbol{\nu}^{(\eta)}(t)=(-1)^\eta\, \mat{H}^\eta E_{\alpha}(\mat{D}\,t)\,{\bf u},\quad \forall \eta\geq 0.
\end{equation}
The solution $\vec{y}(t)$ is finally obtained by summing all those partial contributions, and yields
\begin{equation}
\vec{y}(t)=\sum_{\eta=0}^\infty \boldsymbol{\nu}^{(\eta)}(t)=(\mat{I} + \mat{H})^{-1}\, E_{\alpha}(\mat{D}\,t)\,{\bf u},
\end{equation}
which can be expressed equivalently as an integral equation in the form
\begin{equation}
\vec{y}(t)=E_{\alpha}(\mat{D}\,t)\,{\bf u}-\int_0^t ds\, {\bf f}(s) \mat{D}^{-1} \mat{M}\, {\vec{y}}(t-s).
\end{equation}
Taking the Laplace transform, and using that~\cite{gorenflo_mittag-leffler_2020}
\begin{equation}
 \mathscr{L}\{t^{\beta-1} E_{\alpha,\beta}(c\,t^\alpha)\}=s^{-\beta} (1-c\,s^{-\alpha})^{-1},\label{Laplace_MLF}
\end{equation}
we obtain
\begin{equation}
{\hat {\vec{y}}}(s)=s^{-1}(\mat{I} - \mat{D}\,s^{-\alpha})^{-1}\,{\bf u}+\mat{D}\,s^{-\alpha}(\mat{I}- \mat{D}\,s^{-\alpha})^{-1}\, \mat{D}^{-1} \mat{M}\,{\hat{\vec{y}}(s)},
\end{equation}
where ${\hat {\vec{y}}}(s)=\mathscr{L}\{y(t)\}$. The solution is
\begin{equation}
{\hat {\vec{y}}}(s)=s^{-1}(\mat{I}- \mat{D}\,s^{-\alpha}- \mat{D}\,s^{-\alpha}\, \mat{D}^{-1} \mat{M})^{-1} \,{\bf u}.
\end{equation}
After applying the inverse Laplace transform and using it again (\ref{Laplace_MLF}), we obtain
\begin{equation}
{\vec{y}}(t)=E_{\alpha}((\mat{D}+\mat{M})t^{\alpha})\,{\bf u}=E_{\alpha}(\mat{A}\,t^{\alpha})\,{\bf u},
\end{equation}
which concludes the proof.
\end{proof}
\subsection{Implementation} \label{sec:implementation}

\begin{algorithm}[t]
\caption{Calculates the $i$-th entry of $\vec{y}(t) = E_\alpha (\mat{A} \, t^\alpha) \, \vec{u}$ based on Theorem~\ref{theorem1}. $N_p$ indicates the number of random paths.} \label{code:ml_ctrw}

\begin{algorithmic}[1]
\Function{MCML\_Single}{$\mat{A}$, $\vec{u}$, $\alpha$, $i$, $N_p$, $t$}

\State $y_i = 0$

\For{\textbf{each} random path}
    \State $X_0 = i; \omega = 1$; $k = 0$
    \State Generate $Z_\alpha \sim ML_\alpha(d_{X_0})$
    \State $\tau = Z_\alpha$

    \While{$\tau < t$}
        \State $X_{k + 1} =$ \Call{SelectNextState}{$X_k$, $\mat{Q}$}
        \State $\omega = \omega \times r_{X_k} g_{X_k, X_{k + 1}}$
        \State Generate $Z_\alpha \sim ML_\alpha(d_{X_k})$
        \State $\tau = \tau + Z_\alpha$
        \State $k = k + 1$
    \EndWhile

    \State $y_i = y_i + \omega \, u_{X_k}$
\EndFor
\State $y_i = y_i / N_p$
\State \Return $y_i$
\EndFunction

\end{algorithmic}
\end{algorithm}

According to Theorem~\ref{theorem1}, the value of the $i$-th entry of the solution vector $\vec{y}(t)$ can be estimated through the simulation of the stochastic process $X_t$, which consists of generating random paths from the Markov chain $Z$ and then computing the realization of a random variable $\omega$ over these paths. Each random path starts at state $X_0$ and time $\tau = 0$ and then jumps from one state to the next until it reaches the time $\tau = t$. The next state is always selected at random based on the transition matrix $\mat{Q}$ and the sojourn time in each state follows an ML distribution. For a sequence of states $X_0, X_1, \ldots, X_\eta$, the value of $\omega$ associated with this random path can be calculated as

\begin{equation*}
    \omega = \left (\prod_{j=1}^\eta r_{X_{j-1}}\, g_{X_{j-1},X_{j}} \right) u_{X_\eta}.
\end{equation*}

The procedure for simulating the process $X_t$ is described in Algorithm~\ref{code:ml_ctrw}. In order for the computation to be practical, Algorithm~\ref{code:ml_ctrw} can only generate a finite number $N_p$ of random paths and approximate the value of $y_i(t)$ as
\begin{equation}
    y_i(t) \approx \frac{1}{N_p} \sum_{j = 1}^{N_p}{\omega_j},
    \label{eq:single_solution}
\end{equation}
where $\omega_j$ is the realization of $\omega$ over the random path $j$. Naturally, replacing the expected value in (\ref{prob_rep}) by the arithmetic mean over a finite sample size $N_p$ in (\ref{eq:single_solution}) leads to a numerical error $\epsilon$ of order of $O(N_p^{-1/2})$. In fact, it is well-known that the arithmetic mean provides the best unbiased estimator for the expected value, and, for large number $N_p$ of random paths, the error $\epsilon$ is approximately a random variable distributed according to a normal distribution with standard deviation $\epsilon \approx \sigma N_p^{-1/2}$ \cite{berry_accuracy_1941}. Here $\sigma$ denotes the standard deviation of the random variable $\omega$. Note that there is no other source of error in Algorithm~\ref{code:ml_ctrw}.

The main limitation of Algorithm \ref{code:ml_ctrw} is that it can only estimate one entry of $\vec{y}(t)$ at a time, requiring a separated set of random paths to calculate the solution in each point in the domain. Therefore, it is better suited for estimating the solution locally (\ie, for a specific set of a number of points) than the complete solution. A more efficient method for computing the solution for the entire domain is presented next.

\subsection{Computing the full solution vector} \label{sec:full_vec_method}

\begin{algorithm}[t]
\caption{Calculates $\vec{y}(t) = E_\alpha (\mat{A} \, t^\alpha) \, \vec{u}$ based on its full probabilistic representation. $N_p$ indicate the number of random paths.} \label{code:ml_ctrw_full}

\begin{algorithmic}[1]
\Function{MCML\_Full}{$\mat{A}$, $\vec{u}$, $\alpha$, $N_p$, $t$}

\State $\vec{y} = 0$
\State $\vec{p}^{(0)} = \left \{ p_i^{(0)} = \cfrac{|u_i|}{\|\vec{u}\|_1} \right \}$

\For{\textbf{each} random path}
    \State $k = 0$
    \State $X_0 =$ \Call{SelectInitialState}{$\vec{p}^{(0)}$}
    \State $\omega = \cfrac{u_{X_0}}{|u_{X_0}|} \|\vec{u}\|_1$
    \State Generate $Z_\alpha \sim ML_\alpha(d_{X_0})$
    \State $\tau = Z_\alpha$

    \While{$\tau < t$}
        \State $X_{k + 1} =$ \Call{SelectNextState}{$X_k$, $\mat{Q}^\intercal$}
        \State $\omega = \omega \times r_{X_k} g_{X_k, X_{k + 1}}$
        \State Generate $Z_\alpha \sim ML_\alpha(d_{X_k})$
        \State $\tau = \tau + Z_\alpha$
        \State $k = k + 1$
    \EndWhile

    \State $y_{X_k} = y_{X_k} + \omega$
\EndFor
\State $\vec{y} = \vec{y} / N_p$
\State \Return $\vec{y}$
\EndFunction

\end{algorithmic}
\end{algorithm}

Theorem~\ref{theorem1} can be conveniently modified to represent the complete solution vector $\vec{y}(t)$ rather than a single entry. The procedure is identical to the case of the matrix exponential (namely $\alpha=\beta=1$), which is described fully in~\cite{acebron_highly_2020}. For completeness, we present here the main details. The representation requires generating random paths, which now start at a randomly chosen state $j$ and time $t=0$ according to a suitable distribution probability $p_j^{(0)}=\mathds{P}(X_0=j)$, ending at state $i$ at time $t$. Those paths evolve in time-changing states governed now by $\mat{Q}^\intercal$ as the corresponding transition matrix of the Markov chain. This can be explained by resorting to Bayes' theorem. In fact, it holds that
\begin{align*}
    p_{ji} &= \mathds{P}(X_t=i|X_0=j) \\
           &= \mathds{P}(X_t=j|X_0=i) \frac{\mathds{P}(X_t=i)}{\mathds{P}(X_0=j)} \\
           &= p_{ij} \frac{p_i}{p_j^{(0)}} \\
           &= (\mat{P}^\intercal)_{ji} \frac{p_i}{p_j^{(0)}},
\end{align*}
where $p_i=\mathds{P}(X_t=i)$. Note that the matrix probability $(\mat{P}^\intercal)$ corresponds to a Markov chain governed now by the transition matrix $\mat{Q}^\intercal$. As it was explained in~\cite{acebron_highly_2020}, the distribution function $p_j^{(0)}$ can be arbitrarily chosen. However, the choice may have a direct impact on the variance, and, in turn, on the performance of the algorithm. For simplicity, we use here the more reasonable choice corresponding to $p_j^{(0)}$ proportional to $|u_j|$, since this resembles the well-known importance sampling method for variance reduction, where the sampling is done according to the importance of the data.

Algorithm \ref{code:ml_ctrw_full} describes the stochastic method based on this alternative representation. The main advantage of this method is the ability to estimate the entire solution vector with a single set of random paths, naturally distributing the contribution of each random path over all points in the domain. Due to this property, Algorithm \ref{code:ml_ctrw_full} is often more efficient than Algorithm \ref{code:ml_ctrw} for estimating the entire solution.

\subsection{Generating random numbers from a Mittag-Leffler distribution}

For every state in the random path, Algorithms~\ref{code:ml_ctrw} and~\ref{code:ml_ctrw_full} generate a random number $Z_\alpha$ from the ML probability distribution to determine the time spent in this state. The value of $Z_\alpha$ is then later added to the total time $\tau$ of the random path. The random path ends when $\tau \geq t$.

Considering that an ML random variable follows the Geometric Stable Laws~\cite{kozubowski_univariate_1999, kozubowski_fractional_2001}, it can be represented as a combination of an exponential and a stable random variable~\cite{kozubowski_exponential_2000, kozubowski_computer_2000}. Using this representation, Kozubowski~\cite{kozubowski_computer_2000} deducted the following formula for generating $Z_\alpha \sim ML_\alpha(\gamma)$ for $0~<~\alpha~\leq~1$:
\begin{equation}
    \label{eq:ml_rand}
    Z_\alpha = - |\gamma|^{-\frac{1}{\alpha}} \, \ln{(U)} \, [\sin(\alpha\pi) \cot(\alpha \pi V) - \cos(\alpha\pi)]^{\frac{1}{\alpha}},
\end{equation}
where $U, V$ are random numbers from the uniform distribution over the $[0, 1]$ interval and $\gamma$ is the rate parameter. For $\alpha = 1$, (\ref{eq:ml_rand}) reduces to the inversion formula for the exponential distribution: $Z_1 = - |\gamma|^{-1} \ln{U}$. Note that for $\alpha > 1$, the ML function is not monotonic and, thus, cannot be considered a probability distribution~\cite{pillai_mittag-leffler_1990}. It is worth mentioning that the \textit{rejection sampling} technique~\cite{devroye_non-uniform_1986, marsaglia_ziggurat_2000, rubin_efficient_2006} can also be used for generating the ML random numbers, however, it requires the construction of a pointwise representation directly from (\ref{eq:ml_def}), which can be quite expensive due to the slow convergence of the series~\cite{fulger_random_2013}.

\section{Numerical Results}\label{sec:results}

In this section, we evaluate the performance of our Monte Carlo method by solving time-fractional partial differential equations (fPDEs) through the method of lines \cite{schiesser_numerical_1991, guo_fractional_2015}. In this method, the spatial variables are discretized, transforming the original problem into a system of coupled fractional ordinary differential equations. The system can then be solved by computing the action of the Mittag-Leffler function over the discretized initial values. With the method of lines, we solve the Dirichlet boundary-value problem for both a 2D time-fractional diffusion equation and a 3D time-fractional convection-diffusion equation. We conclude with an example that uses a more complex geometry and the Neumann boundary conditions. 

\subsection{Setup}

All simulations regarding the shared-memory architecture were executed on a commodity workstation with an AMD Ryzen 5800X 8C @4.7GHz and 32GB of RAM, running Arch Linux. The \texttt{MCML\_Full} (Algorithm \ref{code:ml_ctrw_full}) was implemented in C++ with OpenMP and uses \texttt{PCG64DXSM} \cite{oneill_pcg_2014} as its random number generator. \texttt{PCG64DXSM} is an improved version of the default generator of the popular NumPy \cite{numpy} module for Python. The code was compiled with the Clang/LLVM v14.0.0 with the \texttt{-O3} and \texttt{-march=znver3} flags. We will refer to the Monte Carlo method that uses dense and sparse matrices as {\tt mc\_dense} and {\tt mc\_sparse}, respectively.

To the best of our knowledge, the only freely available code capable of computing the Mittag-Leffler function for matrices was proposed by Garrappa and Popolizio~\cite{garrappa_computing_2018} and it is written in MATLAB 2021a. This code is based on the Schur-Parlett algorithm~\cite{davies_schur-parlett_2003, higham_functions_2008} that was implemented in MATLAB as the {\tt funm} command. The Schur-Parlett algorithm consists in decomposing the matrix $\mat{A}$ as $\mat{V}\mat{T}\mat{V}^{*}$ (Schur decomposition~\cite{golub_matrix_2013}) and then computing the Mittag-Leffler function on the upper triangular $\mat{T}$ matrix using the Taylor series and the Parlett recurrence. The derivatives for the Taylor series are calculated through a combination of series expansion, numerical inversion of the Laplace transform and summation formula. We will refer to Garrappa and Popolizio's code as \texttt{matlab}.

Note that the MATLAB backend is written in C/C++ and calls the Intel Math Kernel Library~\cite{intel_mkl} for many matrix and vector operations, which exploits very efficiently the hardware resources of modern CPUs, including multi-threading and SIMD units. It is worth remembering that we only compare our code against a MATLAB implementation due to the lack of any parallel code freely available in C/C++.

Nevertheless, there are some noticeable differences between the algorithms. First of all, \texttt{matlab} only supports dense matrices and has a fixed precision (IEEE 754 double-precision standard), while ours supports both dense and sparse matrices as well as an user-defined precision. Moreover, a Monte Carlo method is not only fully parallelizable but also allows the computation of single entries of the vector solution, which is not possible with the classical deterministic algorithm. Due to the random nature of the Monte Carlo algorithms, all results reported in this section correspond to the mean values obtained after several runs. With the exception of the variance analysis, the program calculates the solution for all points in the domain in all simulations.

\subsection{Time-fractional diffusion equation} \label{sec:poisson}

The first example we consider consists of solving the 2D time-fractional diffusion equation,
\begin{equation}
\label{eq:heat}
D^\alpha_t u(\vec{x}, t) = \nabla^2 u(\vec{x}, t),
\end{equation}
in a domain $\Omega = [-\mu, \mu]^2$, with time $t > 0$,  a space variable $\vec{x}~=~(x, y)~\in~\mathbb{R}^2$, an initial condition $u(\vec{x}, 0)=u_0(\vec{x})$ and a Dirichlet boundary condition $u(\vec{x}, t)\lvert_{\partial\Omega}\ = 0$. Here $D^\alpha_t$ is the Caputo's fractional derivative of order $\alpha$ with $0<\alpha \leq 1$. Considering a discrete mesh with $m$ cells in each dimension, such that $\Delta x = \Delta y = 2\mu/m$, and the standard 5-point stencil finite difference approximation, the approximated solution $\hat{u}(\vec{x}, t)$ for (\ref{eq:heat}) can be written as
\begin{equation}
\label{eq:heat_solution}
\hat{u}(\vec{x}, t) = E_\alpha \left (\frac{m ^ 2}{4\mu^2} \, \hat{\mat{L}} \, t^\alpha \right) \, \hat{u}_0(\vec{x}) = E_\alpha(\mat{A} \, t^\alpha) \, \hat{u}_0(\vec{x}),
\end{equation}
where $\hat{\mat{L}}$ denotes the discrete Laplacian operator. Here, we consider that the off-diagonal entries of $\hat{\mat{L}}$ are positive and the diagonal negative.

%


This example is particularly suited for analyzing the numerical errors since the eigenvalues of the matrix $\mat{A}$ can be calculated analytically~\cite{strang_computational_2007} and, consequently, the corresponding solution of the equation. In fact, for a point $\vec{x} = (x, y)$ in the discrete domain, the corresponding eigenvalue $\lambda(\vec{x})$ and eigenvector $\vec{v}(\vec{x})$ are known to be equal to
\begin{align*}
    \lambda(\vec{x}) &= \frac{m^2}{4\mu^2} \left [ 2 \cos \left (\frac{x \pi}{m + 1} \right ) + 2 \cos \left (\frac{y \pi}{m + 1} \right ) - 4 \right ] \\
   \vec{v}(\vec{x}) &= \left \{ v_{(i, j)} = \sin \left ( \frac{ix \pi}{m + 1}  \right ) \sin \left ( \frac{jy \pi}{m + 1} \right ) \text{ for } i, j = 1, 2, \ldots, m \right \}.
\end{align*}

After organizing the eigenvalues and eigenvectors of $\mat{A}$ into the matrices $\mat{D}$ and $\mat{V}$, respectively, the solution for the 2D diffusion problem can be calculated as

\begin{equation}
    \hat{u}(\vec{x}, t) = \mat{V} \, E_\alpha (\mat{D} \, t^\alpha) \, \mat{V}^\intercal \, \hat{u}_0(\vec{x}).
    \label{eq:poisson_analytic_solution}
\end{equation}

Moreover, we can determine the stiffness ratio $r = \lambda_{max} / \lambda_{min}$~\cite{lambert_computational_1973} for this problem, which is given asymptotically by $r \sim 4m^2 / \pi^2$ for large values of $m$. In particular, the diffusion problem can be considered stiff for the range of values of $m$ used in this section and, thus, is a suitable example for testing the performance of our method in solving this class of problems.

For all the results shown in this section, we chose the domain to be $\Omega = [-1, 1]^2$ and the initial condition $\hat{u}_0(\vec{x}) = c \, m^2 \, \delta (\vec{x}-\vec{x}_c)$, which consists on a discrete impulse located at the centre of the computational grid, $\vec{x}_c~=~(m / 2, m / 2)$, and strength $c=1/4096$. Since the {\tt matlab} code only works with dense matrices, it requires around $25$GB of memory when calculating the solution for a discrete mesh with $m = 160$. In comparison, {\tt mc\_dense} uses around $10$GB of memory for the same mesh, while {\tt mc\_sparse}, only a couple of megabytes. For this reason, the maximum number of cells $m$ in any experiment will be equal to $160$. Regarding the time variable $t$, we choose $t = 0.1$ where it is possible to clearly distinguish the diffusion process for different values of $\alpha$.

\begin{figure}[t]
\centering
\begin{minipage}[t]{0.475 \textwidth}
	\centering	\includegraphics[width=\textwidth]{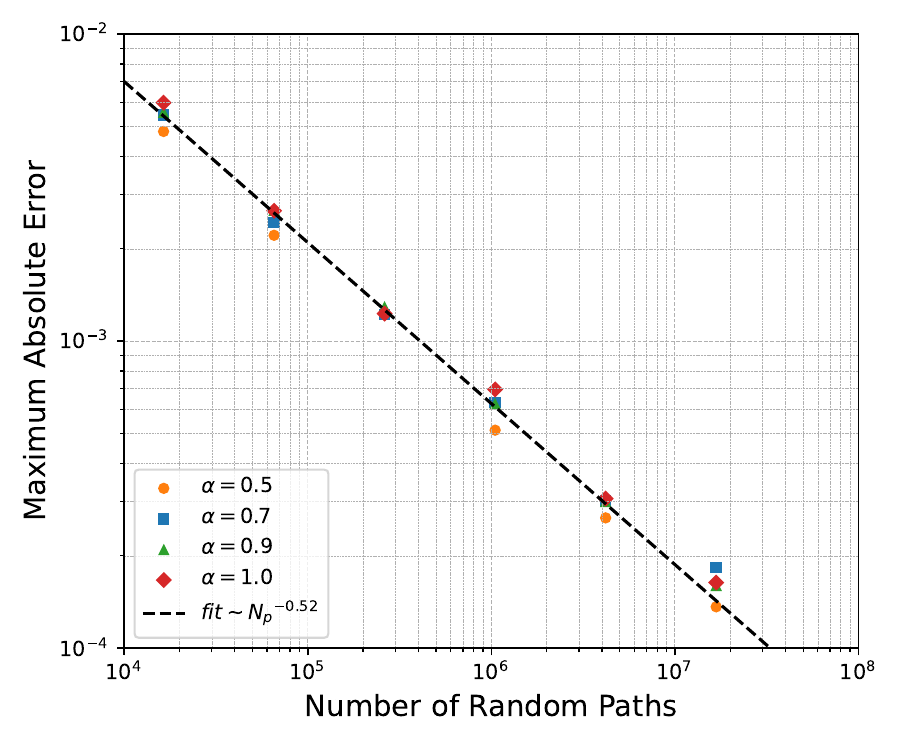}
	\caption{Maximum absolute error as a function of the number of random paths when solving the 2D diffusion equation for $t = 0.1$ and $m = 80$.}
	\label{fig:poisson_err}
\end{minipage} %
\quad
\begin{minipage}[t]{0.475 \textwidth}
	\centering	\includegraphics[width=\textwidth]{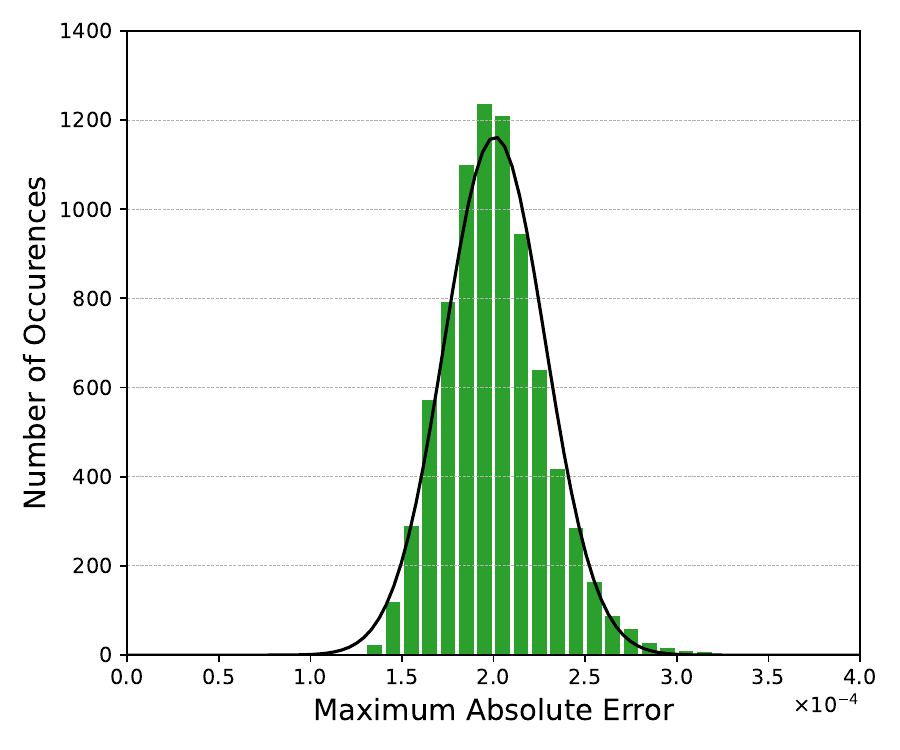}
	\caption{A histogram of the maximum absolute error for $8000$ runs, each one starting with a different random seed. In each run, we solved the 2D diffusion equation with $t = 0.1$, $m = 80$ and $N_p = 2.5 \times 10^{5}$.}
	\label{fig:poisson_err_dist}
\end{minipage}
\end{figure}

Fig.~\ref{fig:poisson_err} shows the maximum absolute error as a function of the number of random paths $N_p$. The error has been computed using the analytical solution in (\ref{eq:poisson_analytic_solution}). Recall from Section~\ref{sec:implementation} that the numerical error of the Monte Carlo method approximates a Gaussian random variable with standard deviation determined by $\sigma \, N_p^{-1/2}$. This relation is confirmed numerically by the trend line in the graph, which has a slope of approximately $-1/2$ in the logarithmic scale. Repeating the Monte Carlo simulation several times with different initial seeds, we observe in Fig.~\ref{fig:poisson_err_dist} that the numerical errors are indeed distributed according to a normal distribution.

\begin{figure}[t]
	\centering	\includegraphics[width=\textwidth]{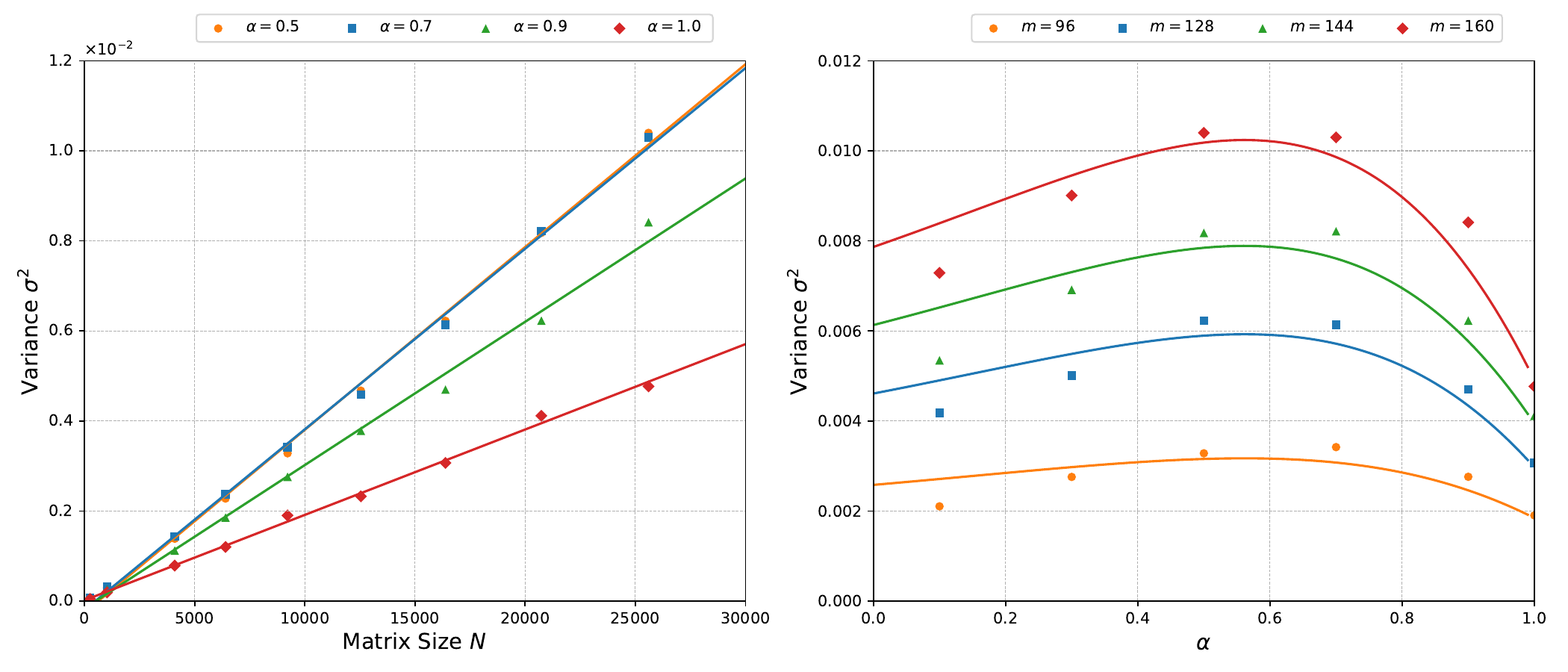}
	\caption{Variance $\sigma^2$ between the random paths as a function of the matrix size $N = m^2$ and $\alpha$ when solving the 2D diffusion equation at a single point located at the centre of the mesh for $t = 0.1$. The number of random paths was kept fixed to $10^{10}$.}
	\label{fig:poisson_scale_var}
\end{figure}

An estimation of the variance can be obtained for this simple initial condition, and in the limiting case when $m\gg 1$. Note that for this case there are only two possible outcomes for the random variable $\omega$ in (\ref{eq:single_solution}), $0$ or $c\, m^2$, with probabilities $1-\xi$ and $\xi$, respectively. The outcome $c\, m^2$ is obtained when the ending state of the random path coincides with the node of the computational mesh corresponding to the point $\vec{x}_c$, being $0$ otherwise. For simplicity, let us assume that we choose the point where the solution is computed to be the same $\vec{x}_c$. Then, the outcome will be different from $0$ only when,
after a random number of jumps $n$ in a random path, the final state is the same as the initial one. Estimating the probability of $n$ is straightforward, since this problem is equivalent to the problem of computing the probability of returning to the initial point after $n$ steps for a simple symmetric 2D random walk \cite{ibe_elements_2013}, and is given by
\begin{equation}
h_n=\left[\binom{n}{n/2}\frac{1}{2n}\right]^2, \quad n=0,2,4,\ldots.
\end{equation}

\begin{figure}[t]
\centering
\begin{minipage}[t]{0.475 \textwidth}
	\centering	\includegraphics[width=\textwidth]{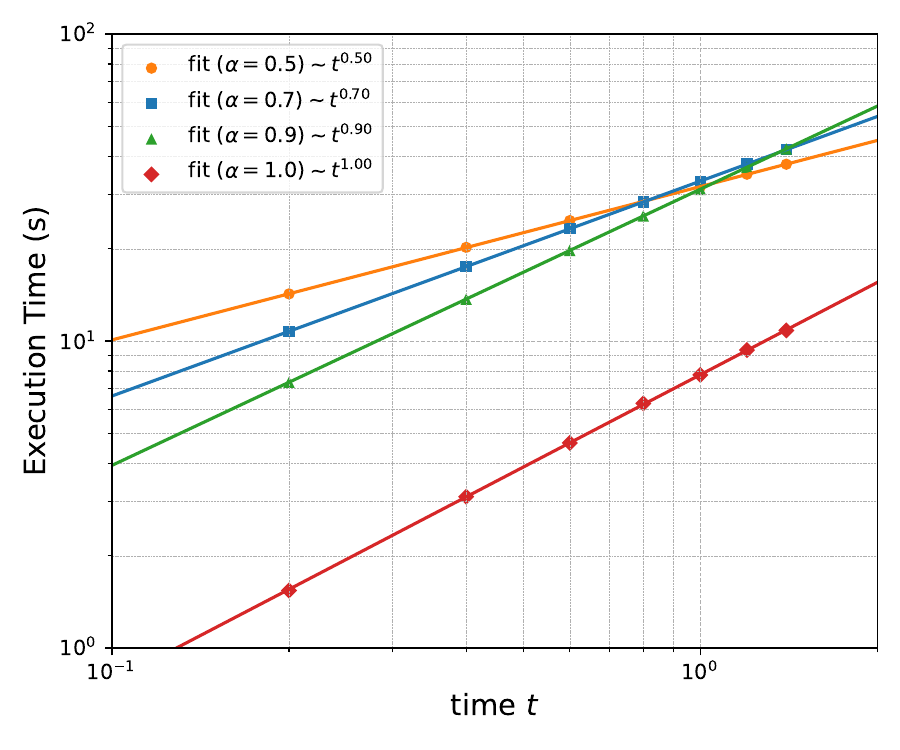}
	\caption{Elapsed time for solving the 2D diffusion equation as a function of the time $t$ and the parameter $\alpha$ for $m = 80$. The number of random paths was kept fixed at $10^5$.}
	\label{fig:poisson_time_scale}
\end{minipage} %
\quad
\begin{minipage}[t]{0.475 \textwidth}
	\centering	\includegraphics[width=\textwidth]{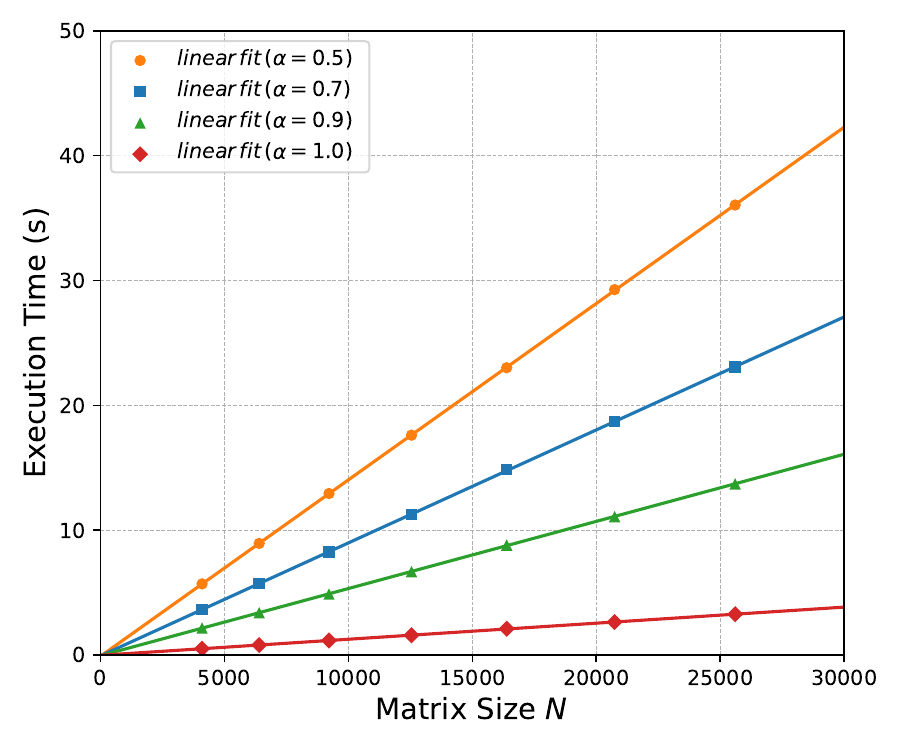}
	\caption{Elapsed time for solving the 2D diffusion equation as a function of the matrix size $N = m^2$ and the parameter $\alpha$ for $t~=~0.1$. The number of random paths was kept fixed at $10^6$.}
	\label{fig:poisson_scale_samples}
\end{minipage}
\end{figure}

Asymptotically when $n\gg 1$, $h_n\sim 2/\pi (n+1)$. Concerning the number of transitions, this is a random variable governed by a probability distribution $P_n$ and depends on the number of events occurring during the time interval $[0,t]$. Moreover, since the sojourn time in every state is Mittag-Leffler distributed, then the probability can be estimated, and it turns out to be the so-called fractional Poisson distribution~\cite{laskin_fractional_2003}, where the mean number of events $\bar{n}$ is
\begin{equation}
\bar{n}=\frac{m^2 t^{\alpha}}{\Gamma{(\alpha+1)}},
\label{eq:mean_num_states}
\end{equation}
and the probability of no events occurring during this time interval is \linebreak
$P_0 = E_\alpha (m^2 \, t^\alpha)$. When $m\to \infty$, a simple piecewise constant function can be used as an approximation and yields
\begin{equation}
 P_n \approx
    \begin{cases}   P_0,   & n\le \frac{1}{P_0} \\
                    0,     & n> \frac{1}{P_0}.
    \end{cases}
\end{equation}
Then, it follows that the probability $\xi$ is given by
\begin{equation}
\xi=\sum_{n=0}^\infty h_{2n} P_{2n}\approx P_0+P_0\sum_{n=1}^{1/P_0} h_{2n}.
\end{equation}
Using the known difference equation $\sum_{k=0}^{n-1}(k+x)^{-1}=\psi(x+n)-\psi(x)$, where $\psi(x)$ is the digamma function, we obtain
\begin{equation}
\xi\approx P_0+\frac{P_0}{\pi}\left[\psi \left(\frac{3}{2}+\frac{1}{P_0} \right)-\psi \left(\frac{5}{2} \right)\right].
\end{equation}
When $m \to \infty$, $P_0\sim [m^2 \, t^{\alpha} \, \Gamma(1-\alpha)]^{-1}$, and then using the asymptotic expansion properties of the digamma function it holds that $\xi\sim [m^{2} \, t^{\alpha} \, \Gamma(1~-~\alpha)]^{-1}$. Thus, asymptotically an estimation of the variance $\sigma^2$ is given by
\begin{equation}
\sigma^2\sim (c\, m^2)^2\xi=c^2\,m^2 \frac{t^{-\alpha}}{\Gamma(1-\alpha)}=c^2\,N \frac{t^{-\alpha}}{\Gamma(1-\alpha)}.
\label{eq:variance_poisson}
\end{equation}

This relation can be seen in Fig. \ref{fig:poisson_scale_var}. For a fixed $\alpha$ and $t$, the variance $\sigma^2$ scales linearly with the matrix size $N = m^2$. Likewise, for a fixed $N$, $t$, the variance $\sigma ^2$ is proportional to $[t^{\alpha} \, \Gamma(1~-~\alpha)]^{-1}$. Note that the points in the graph do not perfectly match the curve from (\ref{eq:variance_poisson}) due to the low number of cells $m$ used in the simulation. Still, it provides the best approximation for the variance $\sigma^2$ under these conditions.

Concerning the computational cost of the Monte Carlo algorithm, this depends on the number of random paths and the time for generating each path. Even though this time is random, we can readily assume it to be proportional to the mean time of events in (\ref{eq:mean_num_states}). Therefore, the computational cost of the algorithm is of the order of $O(N_p \, N \, t^\alpha)$, which is in good agreement with the results shown in Fig.~\ref{fig:poisson_time_scale} and~\ref{fig:poisson_scale_samples}. For a fixed number of random paths~$N_p$ and matrix size $N$, the execution time is proportional to $t^{\alpha}$ as shown in Fig.~\ref{fig:poisson_time_scale}. Note that the scale is $log-log$ and the slope of the curves coincides with $\alpha$. Similarly, when we kept fixed both the time $t$ and number of random paths $N_p$, the execution time grows linearly with the matrix size $N$ as shown in Fig.~\ref{fig:poisson_scale_samples}. It is worth mentioning that exponential random numbers are much cheaper to produce than ML random numbers, resulting in significantly lower execution times when $\alpha = 1$.

\afterpage{
\begin{figure}[t]
	\centering	\includegraphics[width=\textwidth]{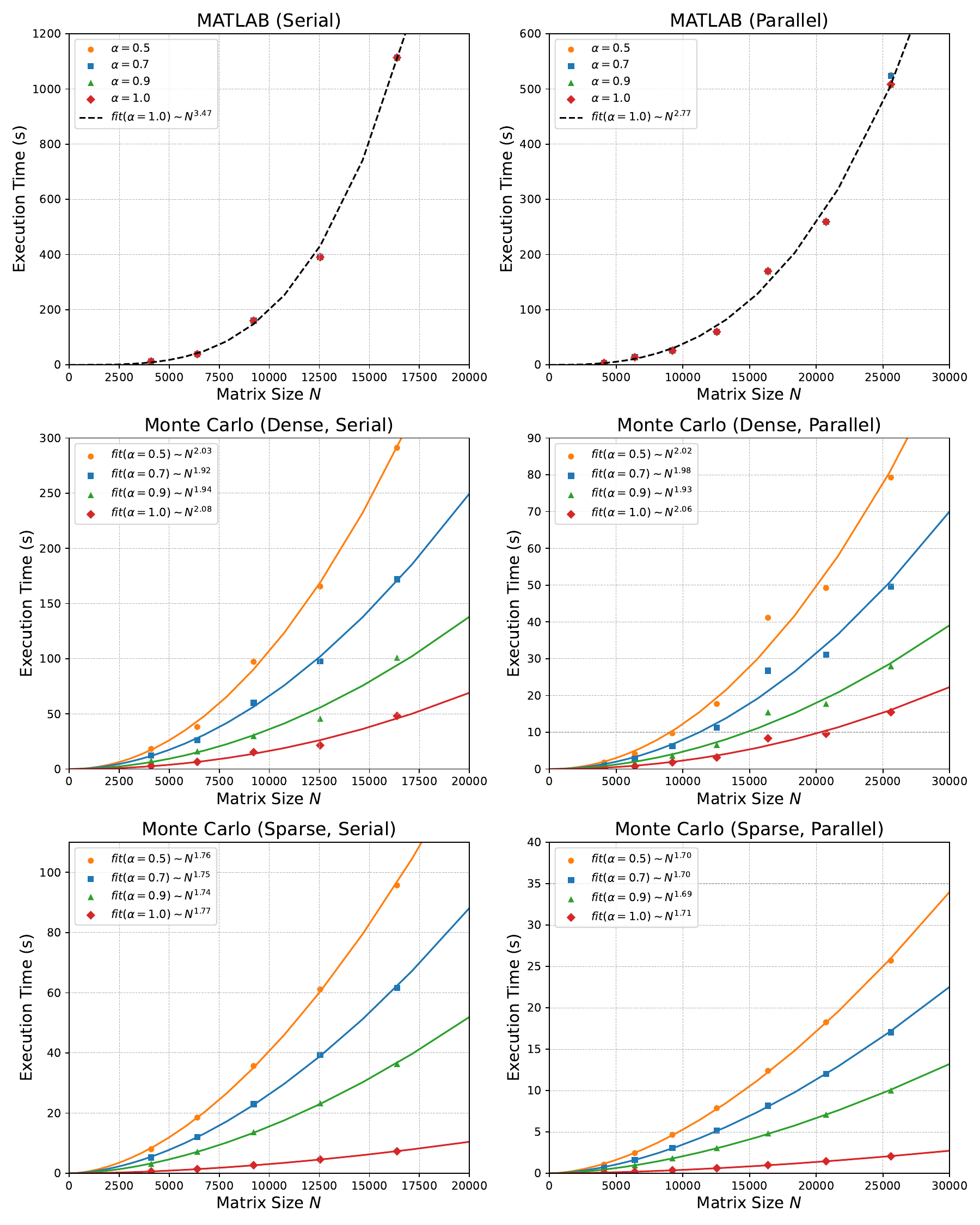}
	\caption{Serial (left) and parallel with 8 threads (right) execution times for solving the 2D diffusion equation as a function of the matrix size $N = m^2$ for $t = 0.1$. For the Monte Carlo algorithms, the accuracy $\epsilon$ was kept fixed at $3 \times 10^{-4}$.}
	\label{fig:poisson_scale}
\end{figure}
\clearpage
}

To compare the performance and scaling of our methods against a classical approach, we measure the serial and parallel execution times for different matrix sizes $N = m^2$ as shown in Fig.~\ref{fig:poisson_scale}. In terms of execution time, the Monte Carlo algorithm is several times faster than the {\tt matlab} code, especially when fully exploiting the sparse nature of the problem, while maintaining a reasonable precision (\ie, the maximum error in any simulation is below $3 \times 10^{-4}$).  When using 8 threads and considering a discrete mesh with $m = 128$, both \texttt{matlab} and \texttt{mc\_dense} shows a speedup of $6.5$ (or an efficiency of $81.25\%$), while \texttt{mc\_sparse} presents a speedup of $7.5$ ($95\%$ efficiency).

Since matrix $\mat{A}$ is real and symmetric, its Schur decomposition can be written as $\mat{A} = \mat{V}\mat{D}\mat{V}^\intercal$, where $\mat{D}$ is a diagonal matrix containing the eigenvalues of $\mat{A}$, and the columns of the matrix $\mat{V}$ the corresponding eigenvectors. In this case, the matrix function $f(\mat{A})$ can be simply evaluated as $f(\mat{A}) = \mat{V}f(\mat{D})\mat{V}^\intercal$~\cite{davies_schur-parlett_2003}.  MATLAB uses a QR algorithm to compute the Schur decomposition, directly calling the corresponding routine from the Intel Math Kernel Library (\eg, \texttt{dsteqr} for real symmetric matrices). The QR algorithm has a well-known computational complexity of order $O(N^3)$.

In comparison, the Monte Carlo algorithm requires an order of $O(N^2)$ operations to solve the diffusion equation, as shown in Fig.~\ref{fig:poisson_scale}. Considering that the variance $\sigma^2$ and, consequently, the statistical error grow with the matrix size $N$, the number of random paths must increase proportionally to the matrix size in order to keep the same level of accuracy for the numerical solution.

\subsection{Time-fractional convection-diffusion equation} \label{sec:fem}

The second example we discuss in this paper consists of solving a time-fractional convection-diffusion equation:
\begin{equation}
\label{eq:convec_diff}
D^\alpha_t u(\vec{x}, t) = c\nabla^2 u(\vec{x}, t) + \nu\nabla u(\vec{x}, t),
\end{equation}
with the respective boundary and initial conditions
\begin{equation}
\label{eq:diff_initial}
u(\vec{x}, t)\lvert_{\partial\Omega}\ = g(\vec{x}, t) \qquad u(\vec{x}, 0) = f(\vec{x}),
\end{equation}
where $c$ is the diffusion coefficient and $\nu$ the velocity field. After applying the standard Galerkin finite element method~\cite{zienkiewicz_finite_2013} to the discretized nodes $x_i$ for $i = 0, 1, \dots, n$, we obtain the following system of equations:
\begin{equation}
\label{eq:fem}
\mat{B} D^\alpha_t \vec{u} = \mat{K} \vec{u} + \vec{h}, \qquad \vec{u}(0) = \vec{u}_0,
\end{equation}
where $\vec{u} = \{u(x_1, t), u(x_2, t), \dots, u(x_n, t)\}$, $\mat{B}$ is the assembled mass matrix, $\mat{K}$ is the corresponding assembled stiffness matrix and $\vec{h}$ is the load vector. The diffusion coefficient $c$, the velocity field $\nu$, and the boundary conditions are already included in these matrices and vectors. To simplify the computation, the mass matrix was lumped \cite{zienkiewicz_finite_2013}, transforming the mass matrix into a diagonal matrix.

The solution for the inhomogeneous systems of fPDEs in (\ref{eq:fem}) can be written in terms of the Mittag-Leffler function as follows

\begin{equation}
\label{eq:fem_solution}
\vec{u}(\vec{x}, t) = E_\alpha(\mat{A} \, t^\alpha) \ \vec{u}_0(\vec{x}) + \int_0^t ds (t - s)^{\alpha - 1} E_\alpha(\mat{A} \, (t - s)^\alpha) \vec{v} ,
\end{equation}
with $\mat{A} = \mat{B}^{-1}\mat{K}$ and $\vec{v} = \mat{B}^{-1}\vec{h}$. Note that the second term on the right side depends on the boundary data, and in particular for numerical purposes, it will require approximating a definite integral. This will entail another source of error along with the aforementioned statistical error. Since the goal of this manuscript is to focus solely on computing numerically the ML function and analyzing the associated statistical error, here we set a null value for the Dirichlet boundary conditions $f(\vec{x}) = 0$, and hence $\vec{h} = 0$. Clearly, the general case deserves further investigation, a detailed analysis is left for a future manuscript.

\begin{figure}
\centering
\begin{minipage}[c]{0.475 \textwidth}
	\centering \includegraphics[width=\textwidth]{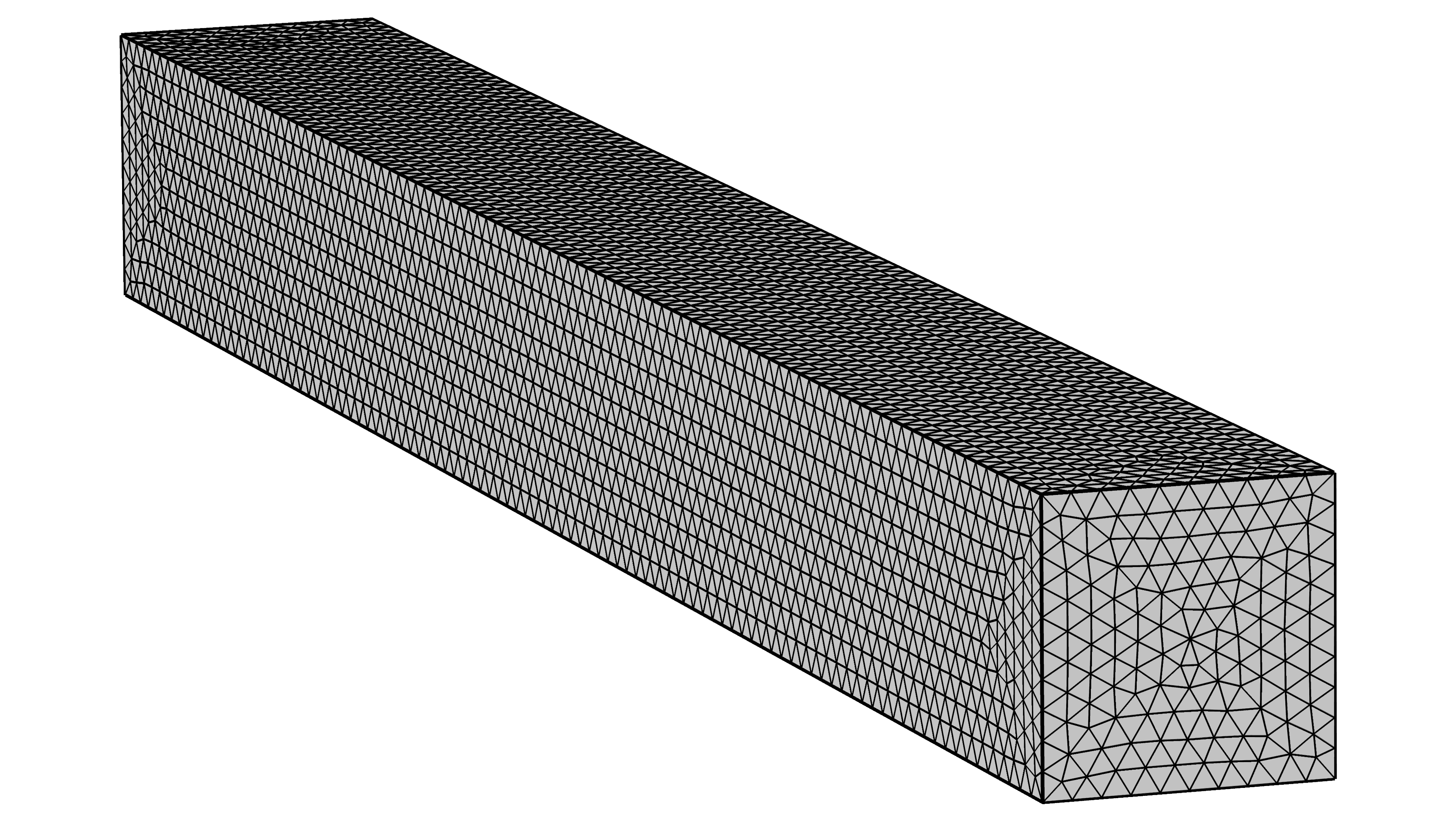}
	\caption{Computational mesh of the domain where the fractional convection-diffusion equation is solved.}
	\label{fig:fem_mesh}
\end{minipage}%
\quad
\begin{minipage}[c]{0.475 \textwidth}
	\centering	\includegraphics[width=\textwidth]{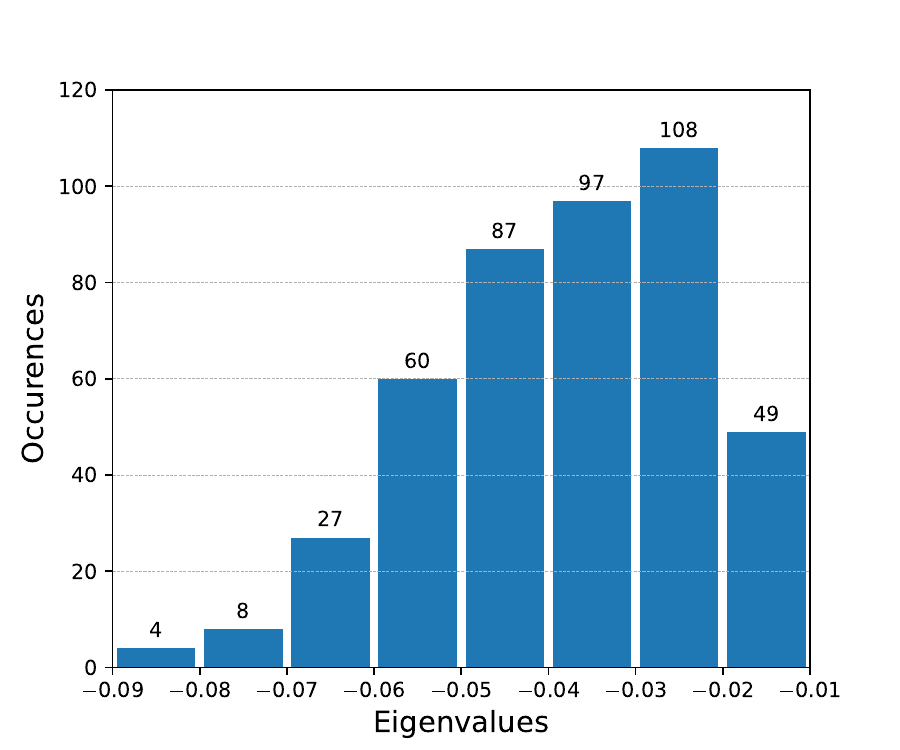}
	\caption{Histogram of the eigenvalues of $\mat{A} = \mat{B}^{-1}\mat{K}$ from (\ref{eq:fem_solution}). Here  the scale parameter was set to $s = 30$, which results in a matrix of size $N~=~453$.}
	\label{fig:fem_eigen_hist}
\end{minipage}
\end{figure}

For the domain, we consider here a block of aluminium of $2s \times 2s \times 20s$ ($W \times H \times L$, in cm) with a scale parameter $s$, an isotropic diffusion coefficient $c~=~9.4 \times 10^{-5} m^2 / s$~\cite{parker_flash_1961}, a field velocity $\nu = 0$ and a Dirichlet boundary condition $u = 0$ on the surface of the block. The finite-element mesh in Fig.~\ref{fig:fem_mesh} was generated using the scientific application tool \textit{COMSOL}~\cite{comsol} considering a maximum and minimum element size equal to $20$ cm and $0.1$ cm, respectively. Based on this mesh, we generated the corresponding finite-element method (FEM) matrices and vectors. For the initial conditions, we use again a discrete impulse located at the position $(s, s, 10s)$. It is worth mentioning that the resulting matrix is asymmetric, having entries entirely arbitrary in both sign and magnitude and in this sense, this can be considered as a much more complex example than the case analysed in the previous section.

\begin{table}[t]
\centering
\caption{The execution time of the \texttt{matlab} code for computing the solution of the 3D convection-diffusion equation for different values of $t$ and $\alpha$ considering a scale parameter $s = 30$ (the total number of nodes in the computational mesh was $453$). The parameters of \texttt{funm} were kept at their default values. The number in parenthesis indicates the order of the derivative of the largest block.}
    \begin{tabular}[c]{ccccc}\toprule
    & $\alpha = 1$ & $\alpha = 0.9$ & $\alpha = 0.7$ & $\alpha = 0.5$\\\midrule
    $t = 20$ & $6.43\ (19)$ & FAIL & FAIL & $919\ (21)$\\\midrule
    $t = 40$ & $5.81\ (22)$ & FAIL & FAIL & $965\ (24)$\\\midrule
    $t = 60$ & $5.55\ (24)$ & FAIL & FAIL & $967\ (24)$\\\midrule
    $t = 80$ & $6.15\ (27)$ & FAIL & FAIL & $971\ (25)$\\\midrule
    $t = 100$ & $6.78\ (30)$ & FAIL & FAIL & $982\ (27)$\\\bottomrule
    \end{tabular}
\label{table:fem_matlab}
\end{table}

Similarly to the previous example, we examine the eigenvalues of matrix $\mat{A}$ to gain some insight about its numerical properties. In this case, the eigenvalues were calculated through the command \texttt{eigs} of MATLAB, which corresponds in essence to the Arnoldi method. Fig.~\ref{fig:fem_eigen_hist} depicts the histogram for the eigenvalues of the matrix $\mat{A}$. The eigenvalues are not only small but are also highly clustered between $-0.02$ and $-0.05$, which is a problem for the \texttt{matlab} code of \cite{garrappa_computing_2018}. On one hand, this leads to very large blocks during the partitioning of the Schur form of $\mat{A}$ in the Schur-Parlett algorithm~\cite{davies_schur-parlett_2003, garrappa_computing_2018}. The evaluation of the Taylor series is particularly expensive for large blocks since it requires order $O(m^4)$ operations, where $m$ is the size of the block. On the other hand, the numerical evaluation of high-order derivatives of the Mittag-Leffler function often entails very large numbers even for small input values due to the rapidly increasing factorial $(x)_k = x(x - 1) \dots (x - k + 1)$:
\begin{equation}
\label{eq:ml_derivative}
   \frac{d^k}{dz^k} E_{\alpha}(z) = \sum_{j = k}^{\infty}{\frac{(j)_k}{\Gamma(\alpha j + 1)}z^{j - k}}, \quad k \in \mathbb{N}.
\end{equation}

Indeed, MATLAB reports an infinite derivative when trying to compute the ML function for $\alpha = 0.7$ and the algorithm fails to converge to a solution for $\alpha = 0.9$ even after computing $250$ terms of the Taylor series. This is shown in Table~\ref{table:fem_matlab}. The \texttt{matlab} algorithm was able to reach a solution for $\alpha = 0.5$ and $\alpha = 1$.

In the Schur-Parlett algorithm, the Schur form of $\mat{A}$ are divided into blocks according to its eigenvalues. In particular, if the absolute difference between two eigenvalues is less than a tolerance $\delta$, they will be assigned to the same block. With the default tolerance $\delta = 0.1$, almost all eigenvalues of $\mat{A}$ will be grouped into a single, large block. Even after decreasing the tolerance to $\delta = 0.01$, most eigenvalues are still concentrated into a single block. Note that the value of $\delta$ cannot be too small, otherwise, the two distinct blocks may contain close eigenvalues, causing the Parlett's recurrence to break down due to numerical cancellations~\cite{davies_schur-parlett_2003, higham_accuracy_2002}. It is worth mentioning that the derivatives of the ML function are calculated serially and, thus, the \texttt{matlab} code shows very little speedup when using multiple threads.

\afterpage{
    \begin{figure}[H]
        \centering	\includegraphics[width=\textwidth]{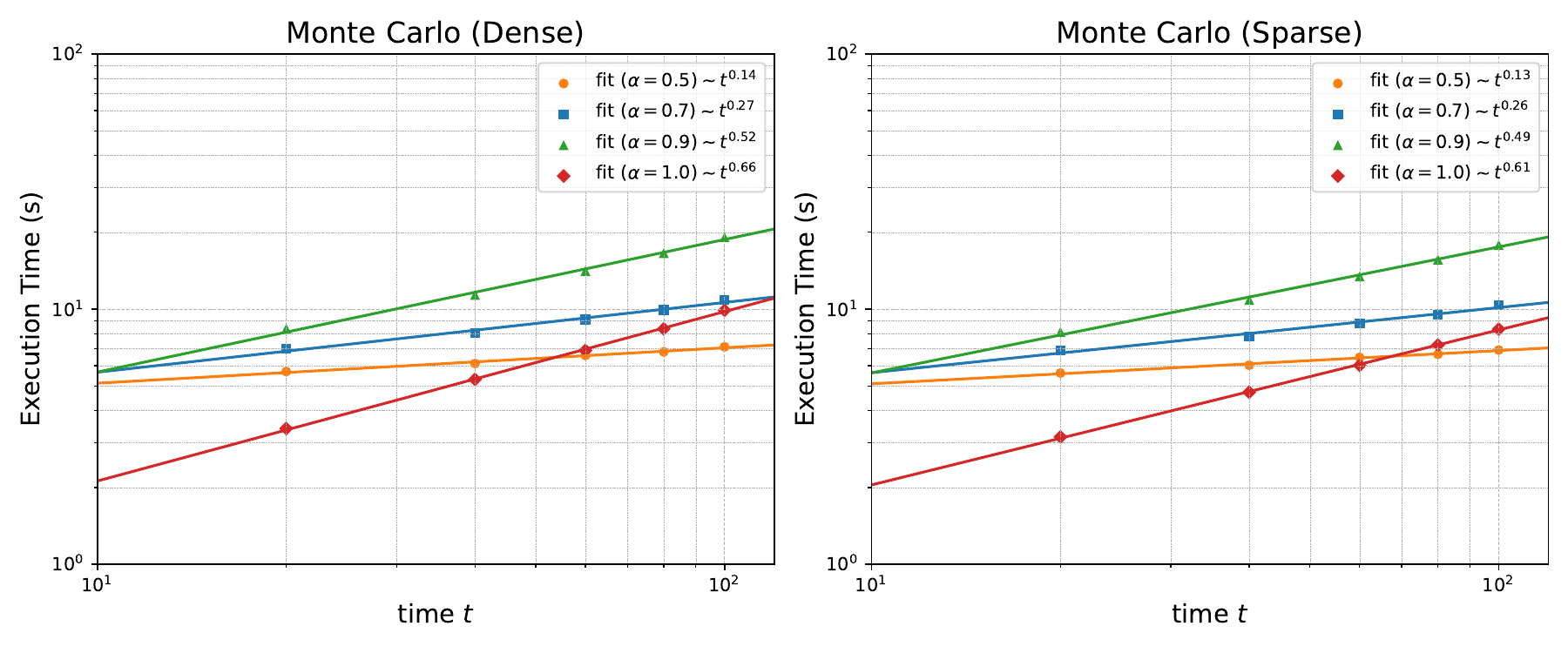}
        \caption{Serial execution time of the probabilistic algorithm for computing the solution of the 3D convection-diffusion equation for different values of $t$ and $\alpha$, considering a scale parameter $s = 30$ (the total number of nodes in the computational mesh was $453$). The number of random paths was kept fixed at $10^8$, resulting in a precision $\epsilon \approx  10^{-4}$.}
        \label{fig:fem_time_scale}
    \end{figure}

        \begin{figure}[H]
    \centering
    \begin{minipage}[t]{0.475 \textwidth}
    	\centering	\includegraphics[width=\textwidth]{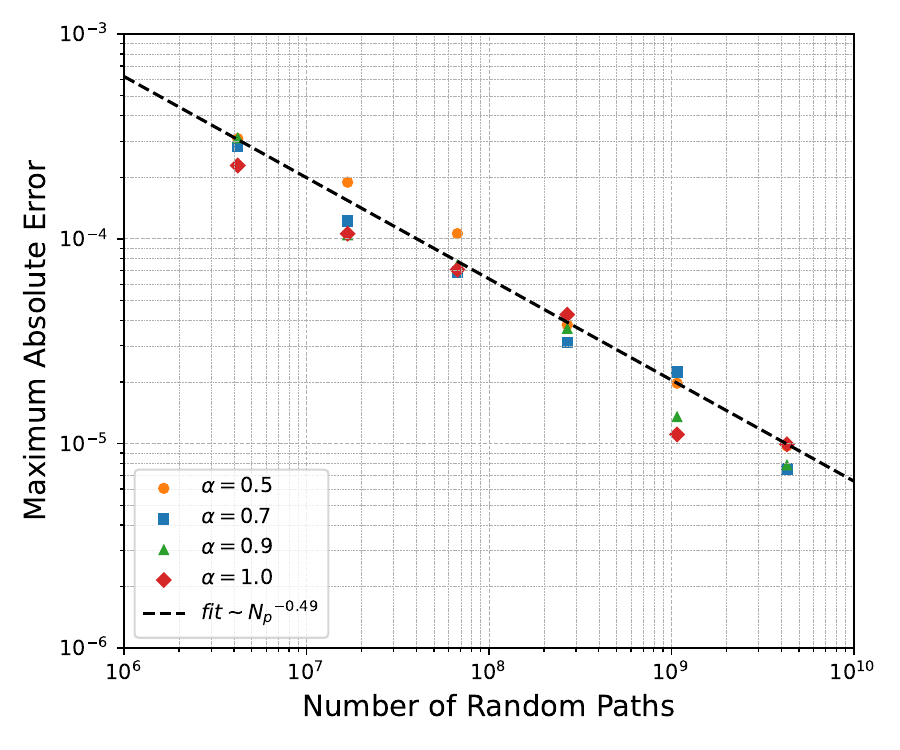}
    	\caption{Maximum absolute error of the \texttt{mc\_sparse} algorithm as a function of the number of random paths when solving the 3D convection-diffusion equation for time $t = 40$ and scale parameter $s = 100$ (the total number of nodes in the computational mesh was $25 120$). }
    	\label{fig:fem_err}
    \end{minipage}%
    \quad
    \begin{minipage}[t]{0.475 \textwidth}
    	\centering	\includegraphics[width=\textwidth]{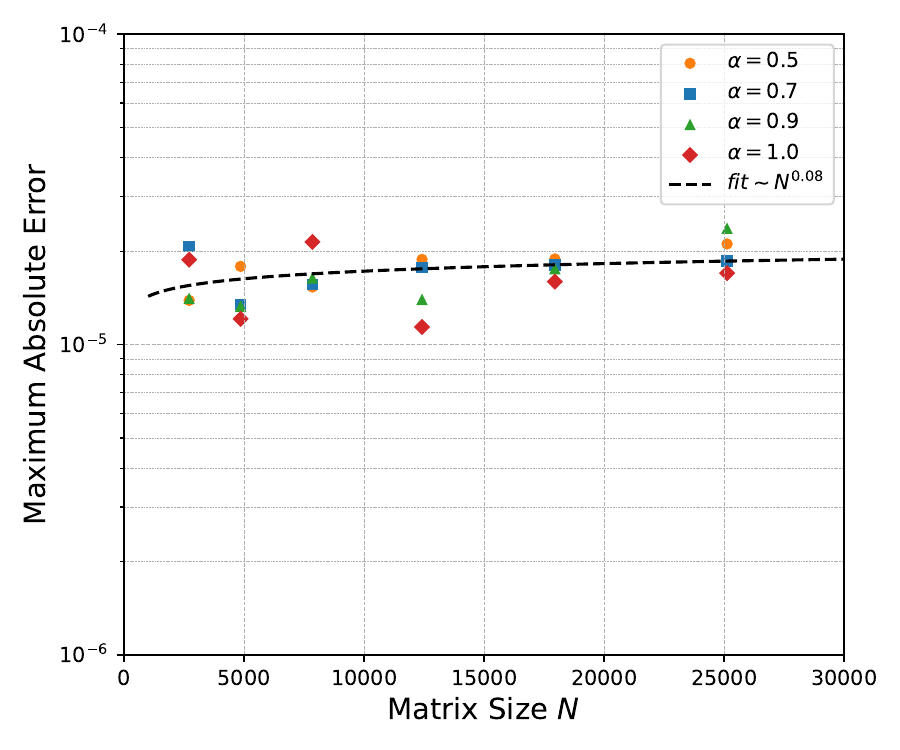}
    	\caption{Maximum absolute error of the \texttt{mc\_sparse} algorithm as a function of the number of nodes $N$ in the computational mesh when solving the 3D convection-diffusion equation for $t = 40$. The number of random paths was kept fixed at $10^9$.}
    	\label{fig:fem_scale_err}
    \end{minipage}
    \end{figure}

    \newpage
}

Concerning the computational cost of the Monte Carlo method for solving this problem, recall that this depends on the number of random paths and the mean number of events that occurred during the prescribed time interval. The former is chosen according to the desired level of accuracy for the numerical solution. As was already explained in the previous example, the numerical error depends not only on the number of random paths but also on the variance, which depends on the specific entries of the matrix $\mat{A}$ and the input vector $\vec{u}$. The more heterogeneous they are, the larger the variance will be, and consequently more random paths will be required to attain the prescribed accuracy. Regarding the mean number of events, this depends on the specific values of the diagonal entries of $\mat{A}$, being larger for larger entries.

In contrast to the \texttt{matlab} code which depends strongly on the distribution of eigenvalues, the Monte Carlo method does not show explicit dependency on them. For this reason, the stochastic method was able to compute the solution regardless of the values of $t$ or $\alpha$. The execution time of {\tt mc\_dense} and {\tt mc\_sparse} for different values of $t$ and $\alpha$ are shown in Fig. \ref{fig:fem_time_scale}. Regarding the parallel performance, {\tt mc\_dense} and {\tt mc\_sparse} usually achieve a speedup between $5$ and $6$ when using $8$ threads.

\begin{figure}[t]
    \centering	\includegraphics[width=\textwidth]{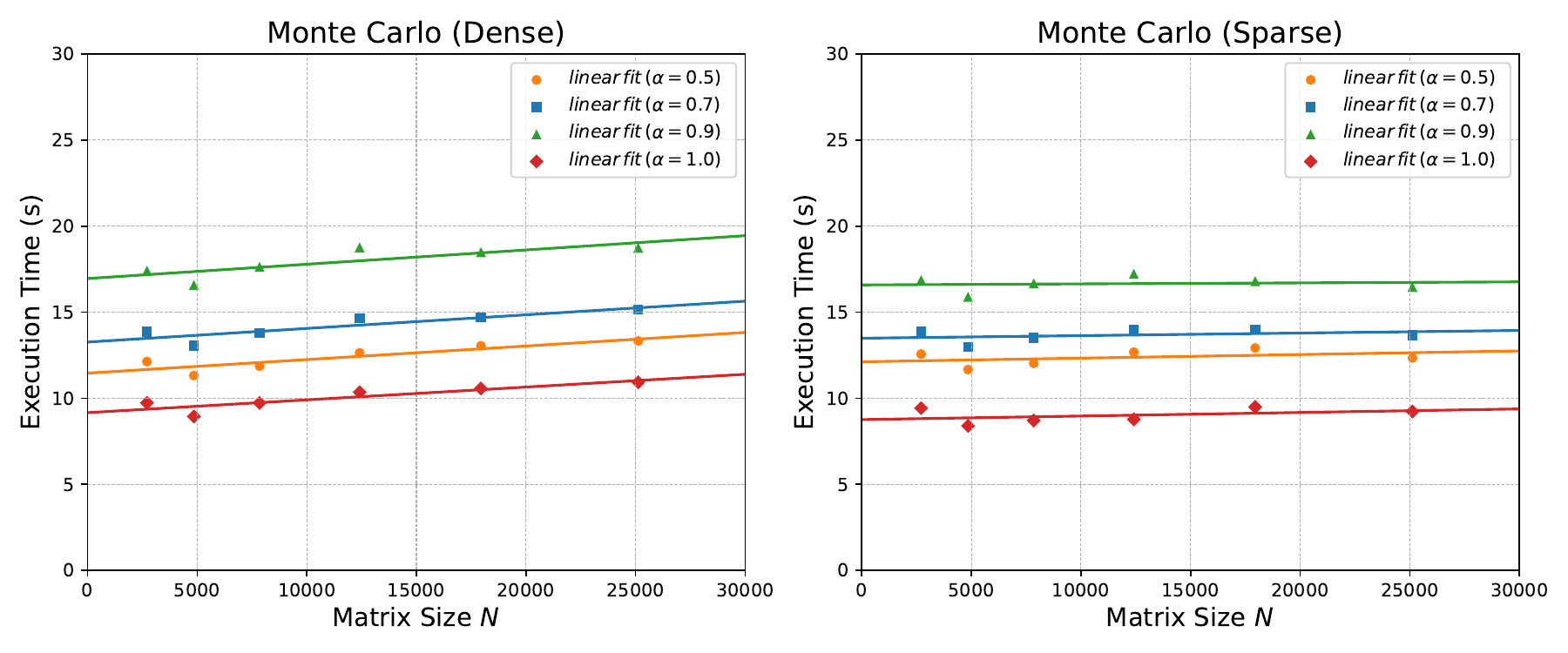}
    \caption{Parallel execution time of the Monte Carlo method as a function of the number of nodes $N$ in the computational mesh when computing the solution of the 3D convection-diffusion equation for $t = 40$. The number of random paths was kept fixed at $10^9$, resulting in an error $\epsilon \approx 2 \times 10^{-5}$.}
    \label{fig:fem_scale}
\end{figure}

Figures~\ref{fig:fem_err} and~\ref{fig:fem_scale_err} show the maximum absolute error as a function of the number of random paths and the matrix size $N$, respectively. Note that the numerical error of the method $\epsilon$ decreases with the square root of the number of random paths as theoretically expected.

In the previous example, the domain was fixed to $\Omega~=~[-1, 1]^2$, and the matrix size was enlarged increasing the number of grid points inside the domain. As a result, the magnitude of the diagonal entries of the matrix increases accordingly, and so do the mean number of events and the variance.  We now consider a case where the distance between two nodes in the finite-element mesh is kept constant when the domain is conveniently scaled in order to increase the matrix size. This leads to a matrix with diagonal entries independent of the matrix size, and therefore, the execution time and error of the Monte Carlo algorithm only depend on the number of random paths. This is shown in Fig.~\ref{fig:fem_scale_err} and~\ref{fig:fem_scale}. Note however that the algorithm needs to perform a binary search for selecting the next state of the random path, which causes the execution time to slightly grow as the matrix size increases. This has a negligible effect on the sparse implementation due to a low number of nonzero entries per matrix row.

\subsection{Complex Geometry}

\begin{figure}[t]
\centering
\begin{minipage}[t]{0.475 \textwidth}
    \centering	\includegraphics[width=\textwidth]{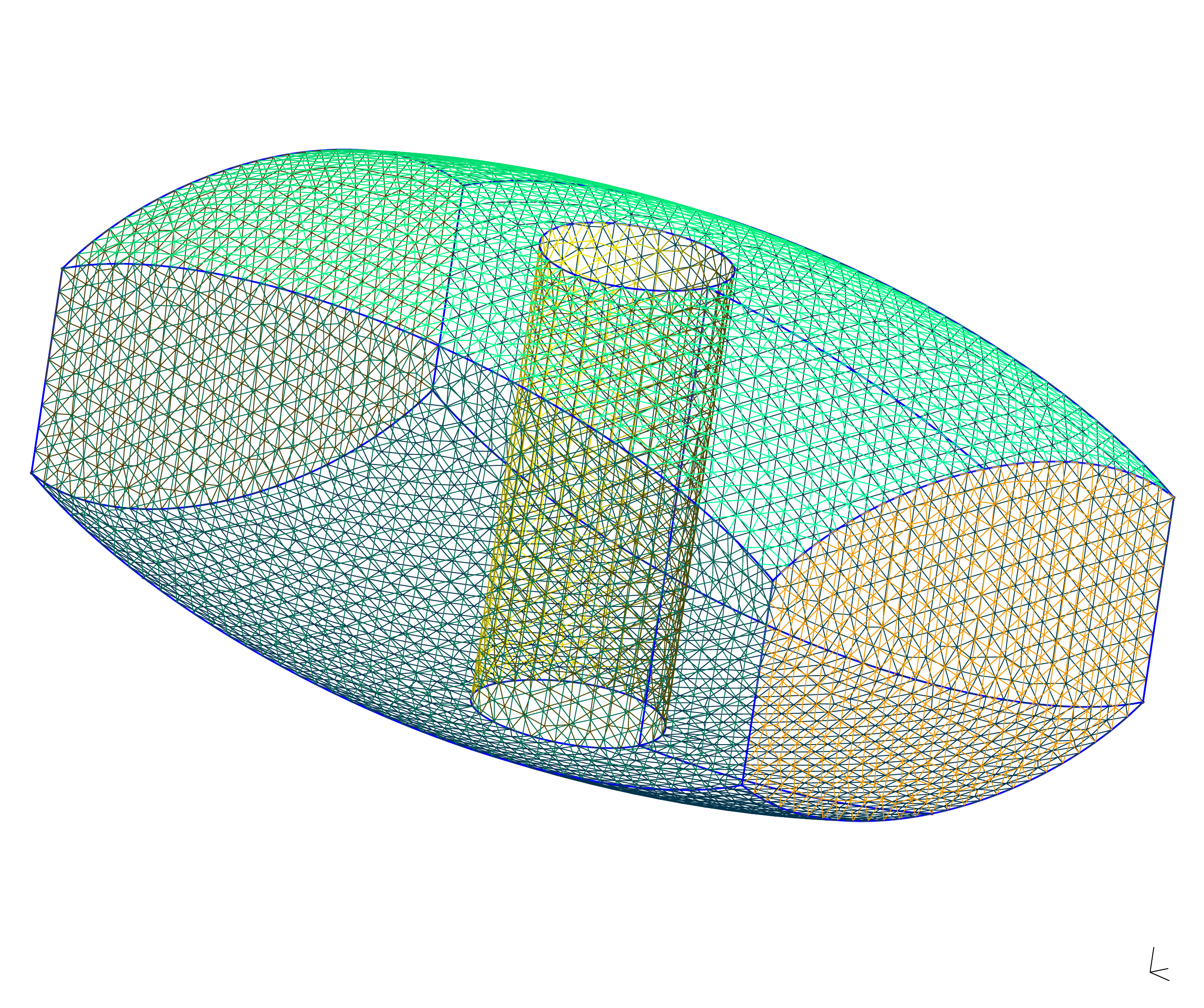}
    \caption{Computational mesh of the complex geometry for solving the fractional convection-diffusion equation.}
    \label{fig:spaceship}
\end{minipage} %
\quad
\begin{minipage}[t]{0.475 \textwidth}
    	\centering	\includegraphics[width=\textwidth]{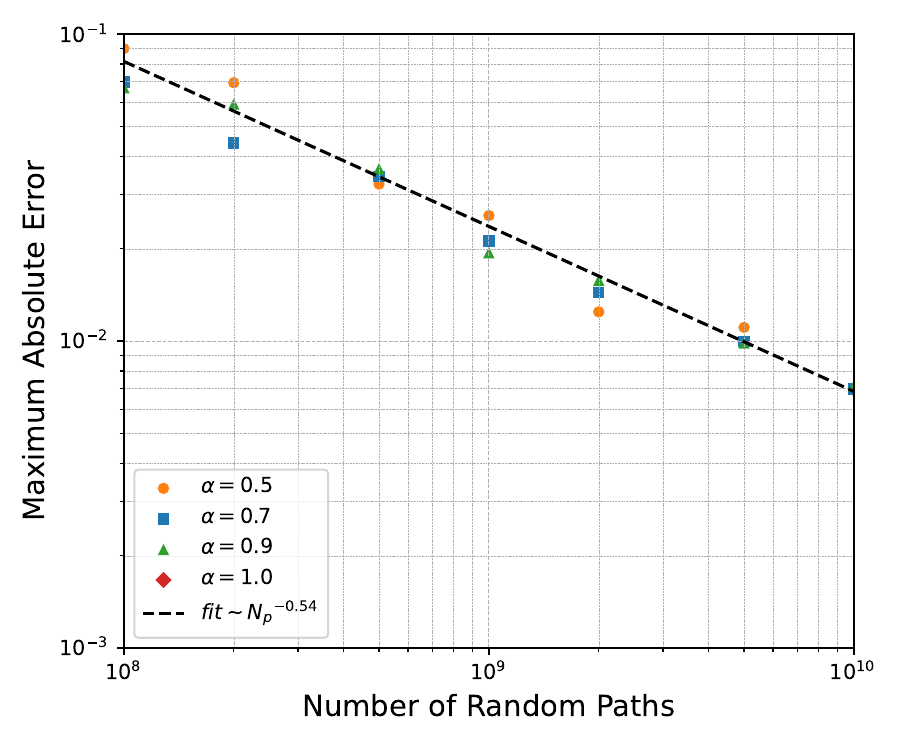}
    	\caption{Maximum absolute error of the \texttt{mc\_sparse} algorithm as a function of the number of random paths when solving the 3D convection-diffusion equation for a complex geometry for $t = 4$. }
    	\label{fig:spaceship_err}
\end{minipage}
\end{figure}

Next, we solved the time-fractional convection-diffusion equation (\ref{eq:convec_diff}) over a more complex
geometry (Fig. \ref{fig:spaceship}). The dimensions of the object are $1 \times 0.6 \times 0.6$ ($W \times H \times L$, in m) and the size of the discrete elements is set to $0.025$, such that the resulting mesh has $14 216$ nodes and $197 344$  elements. Here, we also consider the same isotropic diffusion coefficient $c~=~9.4 \times 10^{-5} m^2 / s$~\cite{parker_flash_1961} and field velocity $\nu = 0$ as before, but with the Neumann boundary conditions $\nabla \vec{u} \cdot \vec{n} = 0$ instead of Dirichlet as before. As the initial condition, we set all nodes within a region in the domain to $1$. More specifically, the nodes within the following range: $x~=~[-0.3, -0.25), y~=~[-0.1, 0.1), z~=~[-0.1, 0.1)$ with the origin set at the centre of the object.

Since the resulting matrix is quite large and the \texttt{matlab} code struggled to solve the equation on a simple mesh, we could only run the sparse version of our algorithm (\texttt{mc\_sparse}) in this example. Fig. \ref{fig:spaceship_err} shows how the maximum absolute error scales with the number of random paths. Although the numerical error in this example is larger than others, it still decreases with $\sqrt{N_p}$, such that the numerical solution of our algorithm will converge to the true solution with a sufficient large number of samples.

\subsection{Distributed-Memory Performance} \label{sec:perf_dist}

\begin{figure}[t]
\centering
\begin{minipage}[t]{0.475 \textwidth}
    \centering	\includegraphics[width=\textwidth]{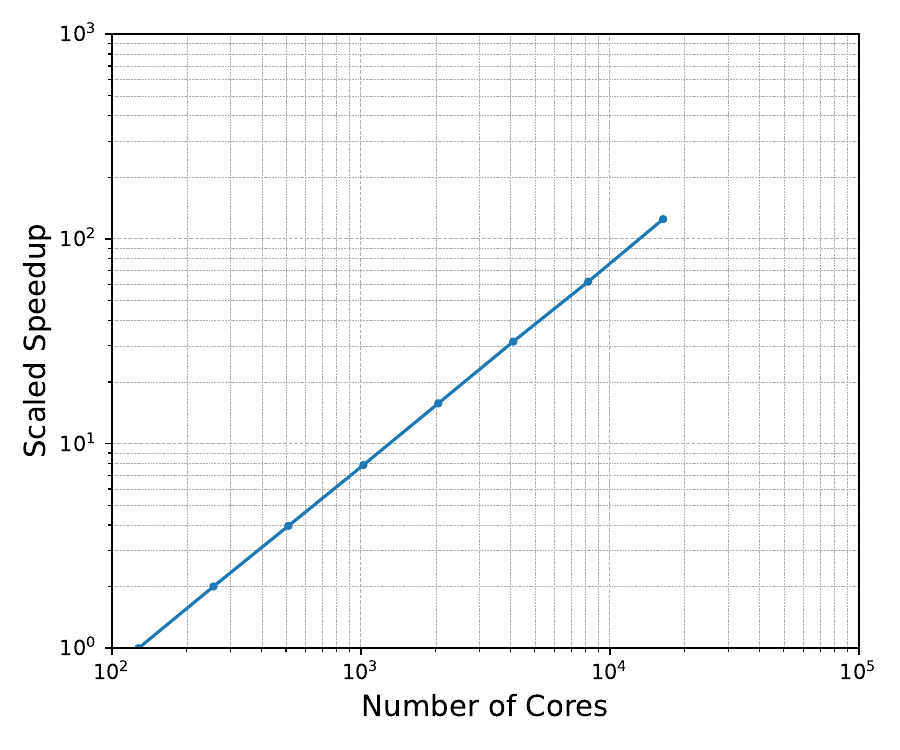}
    \caption{Weak scaling of \texttt{mc\_sparse} in the Karolina supercomputer. This test solves a 2D diffusion equation for $t = 0.1$, $\mu = 1$ and $m = 1024$ ($N = m^2 = 1,048,576$). The number of random paths was chosen based on the number of cores, beginning with $2 \times 10^6$.}
    \label{fig:poisson_mpi}
\end{minipage} %
\quad
\begin{minipage}[t]{0.475 \textwidth}
    \centering	\includegraphics[width=\textwidth]{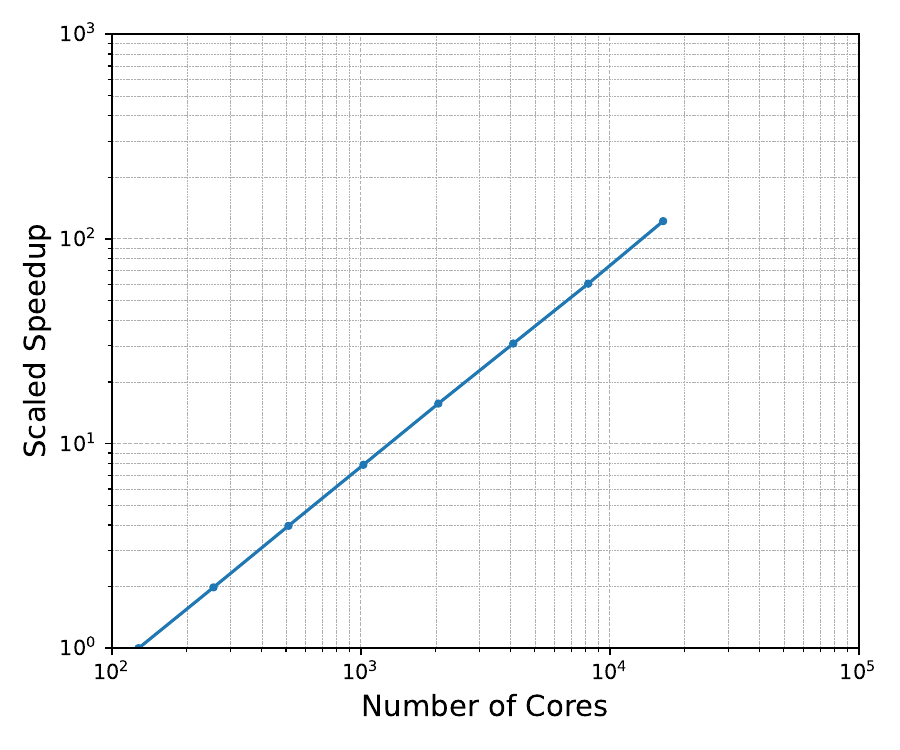}
    \caption{Weak scaling of \texttt{mc\_sparse} in the Karolina supercomputer. This test solves a 3D convection-diffusion equation for $t = 40$ considering a scalar parameter $s = 300$ (the total number of nodes in the computational mesh was $753,675$). The number of random paths was chosen based on the number of cores, beginning with $10^{11}$.}
    \label{fig:fem_mpi}
\end{minipage}
\end{figure}

The distributed-memory tests were carried out at the Karolina Supercomputer located in IT4Innovations National Supercomputing Centre. Each computational node in the cluster contains two AMD 7H12 64C~@2.6GHz CPUs and 256GB of RAM, running CentOS 7. There are two processes per node (one for each processor), each one with 64 threads. Since there is no freely available code capable of computing the action of a Mittag-Leffler function over a vector that is suitable for distributed-memory systems, this section only contains the results for the Monte Carlo method (\texttt{mc\_sparse}). The C++ code was compiled with GCC 12.1.0 and OpenMPI 4.1.4 with the {\tt -O3} and {\tt -march=native} flags.

Figures~\ref{fig:poisson_mpi} and~\ref{fig:fem_mpi} show the weak scaling of \texttt{mc\_sparse} between 1 and 128 nodes ($16,384$ cores), considering a single computational node as the baseline. While the scaling of the domain size is typically used to increase the workload, for the specific FEM problem in (\ref{eq:fem}), as it was shown in the previous section, this does not affect the computational time, and therefore does not provide any useful insight about the scalability of the algorithm. Instead, we increased the workload by scaling the number of random paths according to the number of nodes used. In doing so the statistical error reduces, improving the accuracy of the computed solution.

Since all random paths in the Monte Carlo method are independent, the task of generating them can be done in parallel and the parallel code only needs to communicate at the end of the simulation to combine the results. This is the reason that the \texttt{mc\_sparse} shows perfect scalability in both examples.

\section{Conclusion} \label{sec:conclusion}

In this paper, we propose a novel stochastic method for solving time fractional partial differential equations. These equations are already being used for modelling natural phenomena subject to memory effects, and microscopically are typically described by non-Markovian processes. Fractional equations are capable of capturing such effects due to the inherent non-locality of the operator. As a consequence, the classical numerical schemes often based on time-stepping suffer from heavy memory storage requirements since the numerical solution of the fPDEs at a current time depends on all preceding time instances. This can be even worse when dealing with high-dimensional problems, degrading significantly the performance of the corresponding numerical algorithms.

The main advantage of the proposed method rests on the fact that it allows for the computation of the solution at a single point of a given domain, or equivalently a single entry of the corresponding vector solution. Moreover, the numerical algorithm is not based on any time-stepping scheme, and therefore the solution is obtained without the need to store previous results. Rather, the solution is computed through an expected value of a functional of random processes which resembles the non-Markovian process found in the microscopic description of the phenomenon, and hence exploits somehow naturally the non-locality of the fractional operators.

Furthermore, since the method is based on Monte Carlo it inherits all the known advantages from a computational point of view, such as the comparative ease of implementation in parallel, fault-tolerance, and in general being well suited for heterogeneous architectures. In fact, our parallel implementation was able to solve large-scale problems efficiently in both shared-memory and distributed-memory systems, demonstrating the versatility and scalability of the probabilistic method compared to the classical numerical schemes.

\section*{Acknowledgments}

\begin{sloppy}
This work was supported by national funds through FCT, Fundação para a Ciência e a Tecnologia, under projects URA-HPC PTDC/08838/2022 and UIDB/50021/2020 (DOI:10.54499/UIDB/50021/2020) and grant 2022.11506.BD..
JA was funded by  Ministerio de Universidades and specifically  the requalification program of the Spanish University System 2021-2023 at the Carlos III University.
\end{sloppy}



\bibliographystyle{elsarticle-num}
\bibliography{bibliography}

\end{document}